\newtheorem{theorem}{Theorem}[section]
\newtheorem{lemma}[theorem]{Lemma}
\newtheorem{corollary}[theorem]{Corollary}
\newtheorem{conjecture}[theorem]{Conjecture}
\theoremstyle{definition}
\newtheorem{definition}[theorem]{Definition}
\newtheorem{remark}[theorem]{Remark}
\newtheorem*{remark*}{Remark}
\newtheorem*{proposition*}{Proposition}
\newcommand{\GL}{\text{GL}}
\newcommand{\Spec}{\text{Spec}}
\title{Automorphicity and Mean-Periodicity}
\author{Thomas Oliver}
\thanks{Heilbronn Institute for Mathematical Research, University of Bristol, Bristol, UK.}
\date{\today}
\begin{document}
\begin{abstract}
If $C$ is a smooth projective curve over a number field $k$, then, under fair hypotheses, its $L$-function admits meromorphic continuation and satisfies the anticipated functional equation if and only if a related function is $\mathfrak{X}$-mean-periodic for some appropriate functional space $\mathfrak{X}$. Building on the work of Masatoshi Suzuki for modular elliptic curves, we will explore the dual relationship of this result to the widely believed conjecture that such $L$-functions should be automorphic. More precisely, we will directly show the orthogonality of the matrix coefficients of $\GL_{2g}$-automorphic representations to the vector spaces $\mathcal{T}(h(\mathcal{S},\{k_i\},s))$, which are constructed from the Mellin transforms $f(\mathcal{S},\{k_i\},s)$ of certain products of arithmetic zeta functions $\zeta(\mathcal{S},2s)\prod_{i}\zeta(k_i,s)$, where $\mathcal{S}\rightarrow\Spec(\mathcal{O}_k)$ is any proper regular model of $C$ and $\{k_i\}$ is a finite set of finite extensions of $k$. To compare automorphicity and mean-periodicity, we use a technique emulating the Rankin-Selberg method, in which the function $h(\mathcal{S},\{k_i\},s))$ plays the role of an Eisenstein series, exploiting the spectral interpretation of the zeros of automorphic $L$-functions.
\end{abstract}
\maketitle
\tableofcontents
\section{Introduction}
One reason to be interested in the analytic properties of Hasse--Weil $L$-functions is that they hint at the stronger, and much sought-after, automorphicity of these objects. Indeed, starting with Weil, various ``converse theorems'' have been proved, providing analytic conditions under which Dirichlet series arise from automorphic representations. An important premise of such theorems is that of holomorphic continuation of the series to the complex plane $\mathbb{C}$. This may not be weakened to merely meromorphic continuation. For example one may divide two automorphic $L$-functions to obtain a new Dirichlet series which is not automorphic (for example, it is not in the Selberg class), though it does share many analytic properties, except, of course, holomorphy. 
\newline Quotients of $L$-functions arise naturally as the zeta functions of arithmetic schemes. For example, if $\mathcal{S}\rightarrow\Spec(\mathcal{O}_k)$ is a proper, regular model of a smooth projective curve $C$ over a number field $k$, then one has
\[\zeta(\mathcal{S},s)=n(\mathcal{S},s)\frac{\zeta(k,s)\zeta(k,s-1)}{L(C,s)},\]
where $\zeta(\mathcal{S},s)=\prod_{x\in\mathcal{S}_0}\frac{1}{1-|k(x)|^{-s}}$, the product being taken over the closed points of the arithmetic surface. Ignoring temporarily the term $n(\mathcal{S},s)$, the expression on the right hand side is an ``Euler characteristic of $L$-functions''. Each term in the quotient is a Hasse--Weil $L$-function, in which the Euler factor is the reciprocal characteristic polynomial of the action of the Frobenius operator on inertia invariants of \'{e}tale cohomology, i.e. \[L_{\mathfrak{p}}(H^i(C),s)=\det(1-f_{\mathfrak{p}}N\mathfrak{p}^{-s}|H^i(C)^{I_{\mathfrak{p}}})^{-1}.\] 
Full details for the definition of Hasse--Weil $L$-functions are given in \cite{FLDFZDVA}. The function $n(\mathcal{S},s)$ depends on the choice of model $\mathcal{S}$ of $C$. In fact, it is a finite product of functions rational in variables of the form $p^{-s}$, where $p$ ranges over the residual characteristics of bad reduction. An exercise in \'{e}tale cohomology shows that (the appropriate completion of) $n(\mathcal{S},s)$ satisfies the correct functional equation with respect to $s\mapsto2-s$. We thus see that $L(C,s)$ admits meromorphic continuation if and only if $\zeta(\mathcal{S},s)$ does, and their functional equations are equivalent. In this paper we will focus on the zeta functions of arithmetic surfaces.
\newline Within modern harmonic analysis, long established as an effective tool in the study of number theory, one finds the notion of mean-periodicity. First introduced in $1935$ \cite{LFMP}, the general theory of mean-periodic functions is exposed in \cite{LOMPF}. Recent research places mean-periodicity at the heart of some of the basic open questions concerning the analytic properties of zeta functions of arithmetic schemes\footnote{These ideas originated in Ivan Fesenko's higher dimensional adelic analysis and duality, though here we will not use any of this theory.} \cite{F4}, \cite{F3}, \cite{SRF}. In fact, under fair hypotheses, the meromorphic continuation and functional equation is equivalent to the mean-periodicity of a related object. In light of converse theorems, and their limitations, it is natural to ask what the relationship is between mean-periodicity and the expected automorphicity properties of the Hasse--Weil $L$-functions that appear as factors of zeta. This question was first studied by Suzuki for modular elliptic curves in \cite{TDAAACAROGL2}, and in this paper we generalize his results to arbitrary smooth projective curves over number fields. 
\newline In order to provide some intuition, we begin with some indication of what it means for a function $f$ to be mean-periodic. First, we stress that this is not intrinsic to $f$, rather, it depends upon the spectral properties of the topological functional space $\mathfrak{X}$ within which we consider $f$ to be an element. Let a locally compact topological group $G$ act continuously on $\mathfrak{X}$, it is possible to define the set of ``translates'' of $f\in\mathfrak{X}$ as
\[T(f):=\{g\cdot f:g\in G\}.\]
We say that $f$ is $\mathfrak{X}$-mean-periodic if $T(f)$ is not dense in $\mathfrak{X}$. For example, let $\mathfrak{X}$ be the set of smooth functions on $\mathbb{R}$ and let $G$ be the additive group $\mathbb{R}$, which acts on $\mathfrak{X}$ by $g\cdot f(x)=f(x-g)$. In this we see that we have generalised the definition of periodicity. When $\mathfrak{X}$ satisfies the Hahn-Banach theorem, the definition of mean-periodicity just given is equivalent to the existence of a non-trivial homogeneous convolution equation $f\ast f^{\ast}=0$ for some $f^{\ast}\in \mathfrak{X}^{\ast}$. For example, if a smooth function $f:\mathbb{R}\rightarrow\mathbb{C}$ has period $a\in\mathbb{R}$, then it satisfies the convolution equation $f\ast(\delta_a-\delta_0)=0$, where $\delta_x$ denotes the Dirac measure at $x$. When $\mathfrak{X}$ has the correct spectral properties, there is moreover a notion of generalized Fourier series for $f$. We will not use this property in the paper, but the preceding two properties of mean-periodic functions will be important.
\newline One pragmatic motivation for introducing mean-periodicity is that one can hope to obtain the most simple analytic properties of Hasse--Weil $L$-functions without first solving the far more difficult problem of automorphicity. Writing in $1997$, Langlands expressed some slightly unsatisfactory aspects of relying on the identification with automorphic $L$-functions to verify meromorphic continuation and functional equation \cite[Section~6]{WSFT}. His key point concerned special values of motivic (say, Hasse--Weil) $L$-functions outside of the half-plane in which their associated Dirichlet series converges: an automorphic representation has to have special properties in order to be motivic in nature, whereas the continuation of automorphic $L$-functions is completely uniform and does not consider such properties - if the only way we can formulate a continuation is in terms of the wider class of automorphic $L$-functions, how can we expect to make sense of problems that depend on the arithmetic geometry underlying only a special subclass? In any case, proving modularity results is hard work, and often requires restrictions on the algebraic variety and the base number field. Mean-periodicity, best formulated on the level of zeta functions, is a potential intermediate condition. Moreover, provided a powerful enough relationship, one could hope to use mean-periodicity as a stepping stone to the deep automorphicity results. This paper is intended as an initial search for such a relationship.
\section{Mean-Periodicity and Zeta Functions}
In this section we motivate and state the mean-periodicity correspondence of \cite{SRF}, which provides a bridge between zeta functions of arithmetic schemes and mean-periodic functions in appropriate functional spaces. 
\subsection{Mean-Periodic Strong Schwartz Functions}
As mentioned in the introduction, when dealing with mean-periodicity it is very important to fix the space in which a particular function is considered. It is known that the spaces $\mathscr{C}(\mathbb{R})$, $\mathscr{C}(\mathbb{R}^{\times}_{+})$ of continuous functions, or $\mathscr{C}^{\infty}(\mathbb{R})$, $\mathscr{C}^{\infty}(\mathbb{R}^{\times}_{+})$ of smooth functions, are not appropriate for the theory of zeta functions \cite[Remark~5.12]{SRF}. Instead, we will mostly work in the strong Schwartz space, following the inspiring work of Suzuki \cite{TDAAACAROGL2}. Moreover, this space appears elsewhere in the spectral theory of zeta functions \cite{ASIFTZOTRZF}, and, its dual, the space of (weak) tempered distributions has long been connected with the fundamental properties of Hecke $L$-Functions \cite{FZED}, \cite{OAROTICGRTPAZOLF}. 
\newline In order to avoid repetition of definitions, this section will frequently use that the exponential and logarithm maps provide topological isomorphisms between the locally compact abelian groups $\mathbb{R}$ and $\mathbb{R}^{\times}_{+}$.
\newline For all pairs of positive integers $(m,n)$ define a seminorm $|~|_{m,n}$ on the vector space of smooth functions $f:\mathbb{R}\rightarrow\mathbb{C}$ as follows
\[|f|_{m,n}=\sup_{x\in\mathbb{R}}|x^mf^{(n)}(x)|.\]
The Schwartz space $S(\mathbb{R})$ on the additive group $\mathbb{R}$ is the set of all smooth complex valued functions $f$ such that $|f|_{m,n}<\infty$, for all positive integers $m$ and $n$. The family of seminorms $\{|~|_{m,n}:m,n\in\mathbb{N}\}$ induce a topology on $S(\mathbb{R})$, with respect to which $S(\mathbb{R})$ is a Fr\'{e}chet space over $\mathbb{C}$.
\newline The Schwartz space on $\mathbb{R}_{+}^{\times}$, along with its topology, can be defined via the homeomorphism
\[S(\mathbb{R})\rightarrow S(\mathbb{R}_{+}^{\times})\]
\[f(t)\mapsto f(-\log(x)),\]
where $x=e^{-t}$.
\begin{definition}\label{strongschwartz.definition}
The strong Schwartz space on $\mathbb{R}_{+}^{\times}$ is the following set of functions:
\[\textbf{S}(\mathbb{R}_{+}^{\times}):=\bigcap_{\beta\in\mathbb{R}}\{f:\mathbb{R}_{+}^{\times}\rightarrow\mathbb{C}:[x\mapsto x^{-\beta}f(x)]\in S(\mathbb{R}^{\times}_{+})\},\]
with the topology given by the family of seminorms
\[||f||_{m,n}=\sup_{x\in\mathbb{R}_{+}^{\times}}|x^mf^{(n)}(x)|.\]
\end{definition}
Of course, one can then define $\textbf{S}(\mathbb{R})$ via the homeomorphism
\[f(x)\rightarrow f(e^{-t}),\]
where $t=\log(x)$. The multiplicative group $\mathbb{R}_{+}^{\times}$ acts on $\textbf{S}(\mathbb{R}_{+}^{\times})$ by
\[\forall x\in\mathbb{R}_{+}^{\times},f\mapsto\tau_x^{\times}f\]
\[\tau_x^{\times}f(y):=f(y/x).\]
Dual to the strong Schwartz space on $\mathbb{R}_{+}^{\times}$, we have the space $\textbf{S}(\mathbb{R}_{+}^{\times})^{\ast}$. When we endow the dual space with the weak $\ast$-topology, this is the space of weak-tempered distributions.
\newline Let $\phi$ be a weak-tempered distribution, and let $f$ be a strong Schwartz function. The natural pairing
\[<~,~>:\textbf{S}(\mathbb{R}_{+}^{\times})\times\textbf{S}(\mathbb{R}_{+}^{\times})^{\ast}\rightarrow\mathbb{C}\]
\[<f,\phi>=\phi(f),\]
gives rise to the convolution $f\ast\phi$, which is defined by
\[\forall x\in\mathbb{R}_{+}^{\times},(f\ast\phi)(x)=<\tau^{\times}_x\breve{f},\phi>,\]
where
\[\breve{f}(x)=f(x^{-1}).\]
Explicitly, the convolution is given by the following formula:
\[(f\ast\phi)(x)=\int_0^{\infty}f(x/y)\phi(y)\frac{dy}{y}.\]
We now give an abstract definition of mean-periodicity, which is valid in $\mathfrak{X}=\textbf{S}(\mathbb{R}_{+}^{\times})$
\begin{definition}\label{mp.definition}
Let $\mathfrak{X}$ be a locally convex, separated topological $\mathbb{C}$-vector space, with topological dual $\mathfrak{X}^{\ast}$. Assume that the Hahn-Banach theorem holds in $\mathfrak{X}$, and $\mathfrak{X}$ is equipped with an involution:
\[\check{~}:\mathfrak{X}\rightarrow\mathfrak{X}\]
\[f\mapsto\check{f}.\]
Let $G$ be a locally compact topological abelian group with a continuous representation:
\[\tau:G\rightarrow\text{End}(\mathfrak{X})\]
\[g\mapsto\tau_g.\]
For $f\in\mathfrak{X}$, consider the following topological space:
\[T(f)=\overline{\text{Span}_{\mathbb{C}}\{\tau_g(f):g\in G\}},\]
where the overline denotes topological closure. $f\in\mathfrak{X}$ is $\mathfrak{X}$-mean-periodic if either of the following two equivalent conditions holds:
\begin{enumerate}
\item $\mathcal{T}(f)\neq\mathfrak{X}$,
\item $\exists\psi\in\mathfrak{X}^{\ast}\backslash\{0\}$ such that $f\ast\psi=0$,
\end{enumerate}
where, for $\psi\in\mathfrak{X}^{\ast}$, the convolution $f\ast\psi:G\rightarrow\mathbb{C}$ is given by
\[(f\ast\psi)(g):=<\tau_g\check{f},\psi>,\]
and $<~,~>$ denotes the natural pairing between a topological space and its dual.
\end{definition}
In light of these definitions, we will introduce the following spaces
\[\mathcal{T}(f):=\overline{\text{Span}_{\mathbb{C}}(\{\tau_a^{\times}f:a\in\mathbb{R}_{+}^{\times}\})},\]
\[\mathcal{T}(f)^{\perp}:=\{\tau\in\textbf{S}(\mathbb{R}_{+}^{\times})^{\ast}:g\ast\tau=0,\forall g\in\mathcal{T}(f)\}.\]
\subsection{Functional Equations of Mellin Transforms}
Certain analytic properties of Mellin transforms can be inferred from the mean-periodicity of related functions. For example, consider a real-valued function
\[f:\mathbb{R}_{+}^{\times}\rightarrow\mathbb{R},\]
such that for all $A>0$,
\[f(x)=O(x^{-A}),\text{ as }x\rightarrow+\infty,\]
and for some $A>0$,
\[f(x)=O(x^A),\text{ as }x\rightarrow0^{+},\]
then its Mellin transform defines a holomorphic function in some half plane Re$(s)\gg0$:
\[M(f)(s):=\int_{0}^{+\infty}f(x)x^s\frac{dx}{x}.\]
One can ask when $M(f)$ admits a meromorphic continuation and a functional equation of the following form:
\[M(f)(s)=\varepsilon M(f)(1-s),\]
where $\varepsilon=\pm1$. Define
\[h_{f,\varepsilon}(x)=f(x)-\varepsilon x^{-1}f(x^{-1}),\]
then we see that
\[M(f)(s)=\int_1^{+\infty}f(x)x^s\frac{dx}{x}+\varepsilon\int_1^{+\infty}f(x)x^{1-s}\frac{dx}{x}+\int_0^1h_{f,\varepsilon}(x)x^s\frac{dx}{x}.\]
To avoid cumbersome notation, introduce
\[\varphi_{f,\varepsilon}(s):=\int_1^{+\infty}f(x)x^s\frac{dx}{x}+\varepsilon\int_1^{+\infty}f(x)x^{1-s}\frac{dx}{x},\]
\[\omega_{f,\varepsilon}(s):=\int_0^1h_{f,\varepsilon}(x)x^s\frac{dx}{x}.\]
Due to the first assumption on $f$, $\varphi_{f,\varepsilon}$ is an entire function satisfying
\[\varphi_{f,\varepsilon}(s)=\varepsilon\varphi_{f,\varepsilon}(1-s),\]
so, the meromorphic continuation and functional equation of $M(f)$ is equivalent to that of $\omega_{f,\varepsilon}$. We proceed by noting that $\omega_{f,\varepsilon}$ is in fact the Laplace transform of $h_{f,\varepsilon}(e^{-t})$:
\[\omega_{f,\varepsilon}(s)=\int_0^{\infty}h_{f,\varepsilon}(e^{-t})e^{-st}dt.\]
General theory of Laplace transforms relates the $\mathfrak{X}$-mean-periodicity of $h_{f,\varepsilon}$, for an appropriate\footnote{In this paper we will work with the Strong Schwartz space of functions $\mathbb{R}^{\times}_{+}\rightarrow\mathbb{C}$.} locally convex functional space $\mathfrak{X}$, to the meromorphic continuation of $\omega_{f,\varepsilon}$ to $\mathbb{C}$, given by the Mellin-Carleman transform of $h_{f,\varepsilon}$. Moreover, in this case we have the functional equation
\[MC(h_{f,\varepsilon})(s)=\varepsilon MC(h_{f,\varepsilon})(-s),\]
which is equivalent to
\[\omega_{f,\varepsilon}(s)=\varepsilon\omega_{f,\varepsilon}(1-s).\]
Up to rescaling the variable $s$ (see 2.6), the above outline will be applied to the inverse Mellin transform of
\[Z(\mathcal{S},\{k_i\},s)^m=\xi(\mathcal{S},s)^m\prod_{i=1}^n\xi(k_i,s/2)^m,\]
where $m,n$ are positive integers to be specified, $\mathcal{S}$ is a proper, regular model of a smooth projective curve $C$ over a number field $k$, and $\xi$ denotes its completed zeta function to be defined in the following subsection. For each $i$, $k_i$ is a finite extension of $k$. The meromorphic continuation and functional equation (up to sign) of $Z(\mathcal{S},\{k_i\},s)^{m}$ is equivalent to that of $\zeta(\mathcal{S},s)$. 
\subsection{Zeta Functions and $L$-Functions}
Let $\mathcal{S}$ be a scheme of finite type over $\mathbb{Z}$. For example, an algebraic variety over a finite field, or a proper, regular model of an algebraic variety over a number field. The zeta function of $\mathcal{S}$ is defined, for $\Re(s)>\dim(\mathcal{S})$ by
\[\zeta(\mathcal{S},s)=\prod_{x\in\mathcal{S}_0}\frac{1}{1-|k(x)|^{-s}},\]
where $\mathcal{S}_0$ denotes the set of closed points of $\mathcal{S}$. For a based scheme $\mathcal{S}\rightarrow\Spec(\mathcal{O}_k)$, we have the ``fibral'' decomposition
\[\zeta(\mathcal{S},s)=\prod_{\mathfrak{p}\in\text{Spec}(\mathcal{O}_k)}\zeta(\mathcal{S}_{\mathfrak{p}},s),\]
each $\mathcal{S}_{\mathfrak{p}}$ being a, possibly non-smooth, algebraic variety over the finite field $\mathcal{O}_k/\mathfrak{p}$. The zeta function of such a variety is a rational function in the variable $p^{-s}$, where $p$ is the characteristic of this finite field. 
\newline For example, if $C$ is a smooth, projective curve over a number field $k$ and $\mathcal{S}$ is a proper regular model over $\mathcal{O}_k$ and if $\mathfrak{p}$ is a good prime, then we have
\[\zeta(\mathcal{S}_{\mathfrak{p}},s)=\frac{L_{\mathfrak{p}}(H^0(\mathcal{S}_{\mathfrak{p}}),s)L_{\mathfrak{p}}(H^2(\mathcal{S}_{\mathfrak{p}}),s)}{L_{\mathfrak{p}}(H^1(\mathcal{S}_{\mathfrak{p}}),s)},\]
where, for $i=0,1,2$, each $L$-function is given by a characteristic polynomial:
\[L_{\mathfrak{p}}(H^i(\mathcal{S}_{\mathfrak{p}}),s)=\det(1-f_{\mathfrak{p}}N\mathfrak{p}^{-s}|H^i_{\text{\'{e}t}}(\mathcal{S}_{\mathfrak{p}},\mathbb{Q}_l)),\]
where $l\neq p$ is a prime upon which the polynomial does not depend. At a bad prime, one takes inertia invariants of \'{e}tale cohomology, as explained in \cite{FLDFZDVA}. 
\newline One defines, for $\Re(s)>i+1$,
\[L(H^i(\mathcal{S}),s)=\prod_{\mathfrak{p}}L_{\mathfrak{p}}(H^i(\mathcal{S}_{\mathfrak{p}}),s),\]
and for $\Re(s)>2$, one has the identity
\[\zeta(\mathcal{S},s)=\frac{L(H^0(\mathcal{S}),s)L(H^2(\mathcal{S}),s)}{L(H^1(\mathcal{S}),s)}\]
\newline Let $L(C,s):=L(J(C),s)$, where $J(C)$ is the Jacobian of $C$, which is an abelian variety. One uses \'{e}tale cohomology to see that the function
\[n(\mathcal{S},s)=\zeta(\mathcal{S},s)\cdot\bigg(\frac{\zeta(k,s)\zeta(k,s-1)}{L(C,s)}\bigg)^{-1}\]  
is a finite product of functions rational in variables of the form $p^{-s}$, depending on (the bad fibres of) our choice of model $\mathcal{S}$ from its birational class. The meromorphic continuation of $\zeta(\mathcal{S},s)$ is thus equivalent to that of $\frac{\zeta(k,s)\zeta(k,s-1)}{L(C,s)}$, and hence $L(C,s)$. We will denote the quotient by $\zeta(C,s)$, which does not depend on a choice of model.
\[\zeta(C,s):=\frac{\zeta(k,s)\zeta(k,s-1)}{L(C,s)}\]
As for the functional equation, we must define ``completed'' zeta functions $\xi(\mathcal{S},s)$. We will do so by considering the quotient $\zeta(C,s)$. The functional equation of $L(C,s)$ is of the form
\[\Lambda(C,s)=\pm\Lambda(C,2-s),\]
where $\Lambda(C,s)=L(C,s)A(C)^{s/2}\Gamma(C,s)$ is defined in \cite{FLDFZDVA}. $A(C)$ denotes the conductor of $C$, which accounts for ramification, and $\Gamma(C,s)$ is the gamma factor, which accounts for the Hodge structure of the Betti cohomology at archimedean places of the base number field. The numerator, a product of Dedekind zeta functions $\zeta(k,s)\zeta(k,s-1)$, satisfies a functional equation with respect to $s\mapsto2-s$, involving the completion $\xi(k,s)=\Gamma(k,s)\zeta(k,s)$. The Gamma factor is the usual one for Dedekind zeta functions, which also has an interpretation via the Hodge structure of Betti cohomology.
\newline  According to \cite[Lemma~1.2]{DRCACOC}, $n(\mathcal{S},s)$ admits a functional equation with respect to $s\mapsto2-s$, so that the functional equation of $\zeta(\mathcal{S},s)$ is equivalent to that of $\zeta(C,s)$, which is that the following quotient is invariant, up to sign, under $s\mapsto2-s$:
\[\xi(C,s):=\frac{\xi(k,s)\xi(k,s-1)}{\Lambda(k,s)}.\]
\subsection*{Notation}
The $L$-functions of curves and the zeta functions of their models are intimately connected and in this paper we will pass between the two quite freely. We will use the notation $\zeta(\mathcal{S},s)$ (resp. $\xi(\mathcal{S},s)$) for the zeta function of an arithmetic surface $\mathcal{S}$ (resp. its completion). The notation $L(C,s)$ (resp. $\Lambda(C,s)$) will be used for the Hasse--Weil $L$-function of its generic fibre (resp. its completion). We also have the Hasse--Weil zeta function $\zeta(C,s)$ (and its completion $\xi(C,s)$) of $C$, which depends only on the curve $C$, not any particular choice of model. We will always specify when something depends on a choice of model, for example, $\Gamma(\mathcal{S},s)$ (resp. $A(\mathcal{S})$) denoted the gamma factor (resp. conductor) of $\zeta(\mathcal{S},s)$, the analogues for $L(C,s)$ will be denoted by $A(C)$ and $\Gamma(C,s)$.
\subsection{Abstract Mean-Periodicity Correspondence}
We begin by abstracting the expected analytic properties of the zeta functions of arithmetic schemes. This is broadly comparable to the notion of the Selberg class of $L$-functions \cite{OANCARAACODS}. Quotients of $L$-functions are not generally in the Selberg class and are not expected to be automorphic. 
\begin{definition}
Let $\mathfrak{Z}$ be a complex valued function defined in some half plane $\Re(s)>\sigma_1$, which has a decomposition
\[\mathfrak{Z}(s)=\frac{\mathscr{L}_1(s)}{\mathscr{L}_2(s)}.\]
We will say that $\mathfrak{Z}$ is of ``expected analytic shape'' if, for $i=1,2$, the following hold:
\begin{enumerate}
\item $\mathscr{L}_i(s)$ is an absolutely convergent Dirichlet series in $\Re(s)>\sigma_1$ and has meromorphic continuation to $\mathbb{C}$.
\item There exist $r_i\geq1$ such that, for $1\leq j\leq r_i$, there are $\lambda_{i,j}>0$ with $\mu_{i,j}\in\mathbb{C}$ such that $\Re(\mu_{i,j})>\sigma_1\lambda_{i,j}$ and there are $q_i>0$ such that the function
\[\widehat{\mathscr{L}}_i(s):=\gamma_i(s)\mathscr{L}_i(s),\]
where
\[\gamma_i(s)=q_i^{s/2}\prod_{j=1}^{r_1}\Gamma(\lambda_{i,j}s+\mu_{i,j}),\]
satisfies
\[\widehat{\mathscr{L}}_i(s)=\epsilon_i\overline{\widehat{\mathscr{L}}_i(1-s)},\]
for some $\varepsilon_i\in\mathbb{C}$ such that $|\varepsilon_i|=1$.
\item There exists a polynomial $P(s)$ such that $P(s)\widehat{\mathscr{L}}_i(s)$ is an entire function of order one.
\item The logarithmic derivative of $\mathscr{L}_2(s)$ is an absolutely convergent Dirichlet series in some right half plane $\Re(s)>\sigma_2\geq\sigma_1$.
\end{enumerate}
When $\mathfrak{Z}$ is of expected analytic shape, one defines its completion as
\[\widehat{\mathfrak{Z}}(s)=\frac{\widehat{\mathscr{L}}_1(s)}{\widehat{\mathscr{L}}_2(s)}.\]
We will further assume that all poles of $\widehat{\mathfrak{Z}}$ lie within some vertical strip $|\Re(s)-\frac{1}{2}|\leq\omega$.
\end{definition}
For example, in the notation of the previous subsection, the zeta function of an arithmetic surface $\zeta(\mathcal{S},s)$ is of expected analytic shape if each of its Hasse--Weil $L$-factors $L(H^i(C),s)$ satisfy the conjectures in \cite{FLDFZDVA}, and in this case $\varepsilon=\pm1$, for details see \cite[Remark~5.20]{SRF}.
\newline For $i=1,\dots,m$, let $k_i$ denote a finite extension of $k$. We will be interested in the inverse Mellin transform of functions of the form
\[\widehat{\mathfrak{Z}}(ds)\prod_{i=1}^m\xi(k_i,s),\]
where $d\in\mathbb{N}$ is to be specified and $\xi(k_i,s)$ is the completed Dedekind function of $k_i$. Explicitly let $c>\frac{1}{2}+\omega$, the product above is then analytic to the right of $c$ and the inverse Mellin transform is the contour integral
\[f(\mathfrak{Z},\{k_i\},x):=\frac{1}{2\pi i}\int_{(c)}\widehat{\mathfrak{Z}}(s)\prod_{i=1}^m\xi(k_i,s)x^{-s}ds,\]
where $(c)$ denotes the vertical line $\Re(s)=c$.
\begin{definition}\label{boundaryfunction.definition}
Let $\varepsilon=\frac{\varepsilon_1}{\varepsilon_2}$. The boundary function\footnote{The terminology ``boundary function'' pertains to the idea that these functions should be integrals over topological boundaries of subsets of the related adelic space - this idea will not be used in this paper.} associated to $(\mathfrak{Z},\{k_i\})$ is
\[h(\mathfrak{Z},\{k_i\}):\mathbb{R}_{+}^{\times}\rightarrow\mathbb{C}\]
\[h(\mathfrak{Z},\{k_i\},x):=f(\mathfrak{Z},\{k_i\},x)-\varepsilon x^{-1}f(\mathfrak{Z},\{k_i\},x^{-1}),\]
In additive language, we have
\[H(\mathfrak{Z},\{k_i\}):\mathbb{R}\rightarrow\mathbb{C}\]
\[H(\mathfrak{Z},\{k_i\},t)=h(\mathfrak{Z},\{k_i\},e^{-t}).\]
\end{definition}
For a clean statement of the mean-periodicity correspondence, we need the following technical condition.
\begin{definition}
For $i=1,\dots,m$, let $k_i$ be a finite extension of $k$, and denote by $\xi(k_i,s)$ the completed Dedekind zeta function of $k_i$. Let $\gamma(s)$ be a meromorphic function on $\mathbb{C}$. We will say that $(\gamma(s),\{k_i\})$ is ``nice'' if there is some $t\in\mathbb{R}$ such that for all $a\leq b\in\mathbb{R}$, all $\Re(s)\in[a,b]$, and all $|\Im(s)|\geq t$
\[\gamma(s)\prod_{i=1}^n\xi(k_i,s)\ll_{a,b,t}|\Im(s)|^{-1-\delta}.\]
for some $\delta>0$.
\end{definition}
The mean-periodicity correspondence is encapsulated in the following theorem:
\begin{theorem}\label{MPgeneral.theorem}
Let $\mathfrak{Z}(s)$ be a complex valued function defined in some half plane $\Re(s)>\sigma_1$, which has a decomposition
\[\mathfrak{Z}(s)=\frac{\mathscr{L}_1(s)}{\mathscr{L}_2(s)},\]
then
\begin{enumerate}
\item If $\mathfrak{Z}(s)$ is of expected analytic shape then there exists an $n_{\mathfrak{Z}}\in\mathbb{Z}$ such that, if $\{k_i\}_{1\leq i\leq n}$ is a finite set of (not necessarily distinct) number fields for any $n\geq n_{\mathfrak{Z}}$, then $h(\mathfrak{Z},\{k_i\},x)$ is $\mathscr{C}_{\text{poly}}^{\infty}(\mathbb{R}_{+}^{\times})$- and $\textbf{S}(\mathbb{R}_{+}^{\times})$-mean-periodic. Similarly for $H(\mathfrak{Z},\{k_i\},t)$.
\item If, for some $n\in\mathbb{Z}$, we have number fields $k_i,i=1,...,n$ such that $(\frac{\gamma_1(s)}{\gamma_2(s)},\{k_i\})$ is nice, and $h(\mathfrak{Z},\{k_i\},x)$ is $\mathscr{C}_{\text{poly}}^{\infty}(\mathbb{R}_{+}^{\times})-$ or $\textbf{S}(\mathbb{R}_{+}^{\times})$-mean-periodic, then $\mathfrak{Z}(s)$ extends to a meromorphic function on $\mathbb{C}$ and
\[\mathfrak{Z}(s)=\pm\mathfrak{Z}(1-s).\]
Similarly for $H(\mathfrak{Z},\{k_i\},t)$.
\end{enumerate}
\end{theorem}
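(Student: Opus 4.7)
The two directions lie on opposite sides of the Mellin--Laplace--Carleman correspondence sketched in Section 2.2, and I would establish part (1) by a direct contour-shift argument, then read part (2) off the same dictionary. Starting from the defining contour integral
\[f(\mathfrak{Z},\{k_i\},x)=\frac{1}{2\pi i}\int_{(c)}\widehat{\mathfrak{Z}}(s)\prod_{i=1}^{n}\xi(k_i,s)\,x^{-s}\,ds\]
with $c>\tfrac12+\omega$, the idea for (1) is to shift the contour to the symmetric line $\Re(s)=1-c$, collecting residues at the finitely many poles of the integrand in $|\Re(s)-\tfrac12|\leq\omega$ together with those at $s=0,1$ coming from the Dedekind factors. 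Applying $s\mapsto 1-s$ and invoking the functional equations $\widehat{\mathfrak{Z}}(1-s)=\varepsilon\widehat{\mathfrak{Z}}(s)$ and $\xi(k_i,1-s)=\xi(k_i,s)$ shows that the integral on $(1-c)$ equals $\varepsilon x^{-1}f(\mathfrak{Z},\{k_i\},x^{-1})$, so that
\[h(\mathfrak{Z},\{k_i\},x)=\sum_{s_0}\operatorname{Res}_{s=s_0}\!\left[\widehat{\mathfrak{Z}}(s)\prod_{i=1}^{n}\xi(k_i,s)\,x^{-s}\right]\]
is an explicit finite exponential-polynomial sum. This sum spans a finite-dimensional $\mathbb{R}_{+}^{\times}$-invariant subspace $V$, so $\mathcal{T}(h)\subseteq V$ is a proper closed subspace of the ambient functional space, and $h$ is mean-periodic by Definition \ref{mp.definition}(1); the statement for $H$ follows via the logarithmic change of variable.

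For part (2), I would appeal to the Laplace-transform formalism for mean-periodic functions recalled in Section 2.2. The hypothesis produces by Definition \ref{mp.definition}(2) a nonzero $\psi\in\mathfrak{X}^{*}$ with $H\ast\psi=0$; taking Mellin--Carleman transforms transfers this into a meromorphic continuation of $\omega_{f,\varepsilon}(s)$ to all of $\mathbb{C}$, equipped with the symmetry $\omega_{f,\varepsilon}(s)=\varepsilon\omega_{f,\varepsilon}(1-s)$ that is built into the definition of $h$. Combined with the entire and already $\varepsilon$-symmetric $\varphi_{f,\varepsilon}$, this yields the meromorphic continuation and functional equation $M(f)(s)=\varepsilon M(f)(1-s)$. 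Finally, the niceness hypothesis on $(\gamma_1/\gamma_2,\{k_i\})$ supplies the vertical decay needed to apply Mellin inversion in reverse and divide through by $\prod_i\xi(k_i,s)$, isolating $\widehat{\mathfrak{Z}}(s)$ and extracting both its meromorphic continuation and the functional equation $\mathfrak{Z}(s)=\pm\mathfrak{Z}(1-s)$.

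The main obstacle is the quantitative analytic input underlying part (1). Justifying the contour shift demands uniform vertical bounds on $\widehat{\mathfrak{Z}}(s)\prod_i\xi(k_i,s)$ inside the critical strip; these follow from Phragm\'{e}n--Lindel\"{o}f convexity (leveraging the entire-of-order-one hypothesis in item (3) of the definition of expected analytic shape) combined with Stirling asymptotics applied to the gamma factors of both $\widehat{\mathscr{L}}_1/\widehat{\mathscr{L}}_2$ and each $\xi(k_i,s)$. The integer $n_{\mathfrak{Z}}$ must be chosen large enough that the compounded gamma factors force decay faster than any polynomial on vertical lines --- this is what simultaneously licenses the contour shift, kills the horizontal cross-pieces as $|\Im(s)|\to\infty$, and ensures that the residue expansion defines a bona fide element of $\textbf{S}(\mathbb{R}_{+}^{\times})$ and $\mathscr{C}_{\text{poly}}^{\infty}(\mathbb{R}_{+}^{\times})$ rather than a merely formal identity.
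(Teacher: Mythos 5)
The paper itself does not prove Theorem \ref{MPgeneral.theorem}; it simply cites \cite[Theorem 5.18]{SRF}. Your proposal therefore cannot be ``the same'' as the paper's proof, and it must stand or fall on its own merits. Unfortunately, part (1) has a genuine gap.

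Your contour-shift argument asserts that moving the contour from $\Re(s)=c$ to $\Re(s)=1-c$ collects residues at ``the finitely many poles of the integrand in $|\Re(s)-\tfrac12|\leq\omega$,'' so that $h(\mathfrak{Z},\{k_i\},x)$ is a \emph{finite} exponential-polynomial sum spanning a finite-dimensional $\mathbb{R}^{\times}_{+}$-invariant subspace $V$, whence $\mathcal{T}(h)\subseteq V\subsetneq\mathfrak{X}$. This is false. The function $\widehat{\mathfrak{Z}}=\widehat{\mathscr{L}}_1/\widehat{\mathscr{L}}_2$ has poles at every zero of $\widehat{\mathscr{L}}_2$, and the hypothesis is only that these zeros (and poles of $\widehat{\mathscr{L}}_1$) lie in a vertical strip $|\Re(s)-\tfrac12|\leq\omega$ --- there are still infinitely many of them, with $|\Im(s)|\to\infty$. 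Indeed Lemma \ref{expansion.lemma} in the paper makes this explicit: the expansion of $h_C$ is a limit $\lim_{T\to\infty}\sum_{\Im(\lambda)\le T}\cdots$ over an infinite set of poles $\lambda$. An infinite sum of exponential polynomials has no a priori reason to span a proper subspace, and in fact establishing that $\mathcal{T}(h)\neq\mathfrak{X}$ is the entire content of the theorem. The correct route, both in \cite{SRF} and in the convolutor constructions of Sections \ref{Hecke.section} and \ref{GL.section} of this paper, is to exhibit a nonzero $v\in\mathfrak{X}^{\ast}$ (roughly, the inverse Mellin transform of $\widehat{\mathscr{L}}_2(s)$ times a suitable polynomial) with $v\ast h=0$, and then verify $v\neq 0$; the vertical-decay estimates you discuss are needed precisely to make that convolution identity rigorous and to land $h$ in $\textbf{S}(\mathbb{R}^{\times}_{+})$, not to finite-ize the residue sum.

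Your sketch of part (2) correctly mirrors the Laplace--Carleman formalism of Section 2.2, including the role of the niceness hypothesis in recovering $\widehat{\mathfrak{Z}}$ after dividing off $\prod_i\xi(k_i,s)$, so that direction is plausible modulo the standard details in \cite{SRF}.
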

\begin{proof}
This is an immediate consequence of \cite[Theorem~5.18]{SRF}.
\end{proof}
We will not spend any time trying to optimise this statement in terms of $k_i$ and $n_{\mathfrak{Z}}$. By rescaling the argument of the zeta functions of arithmetic schemes, we can apply the above result to the problem of their meromorphic continuation and functional equation. We will spell this out in the following subsection. 
\newline Let $\mathscr{S}(\mathcal{O}_k)$ denote the set of arithmetic schemes over the ring of integers $\mathcal{O}_k$ in a number field $k$ whose zeta functions have the expected analytic properties, and the $\text{MP}(\mathfrak{X})$ denote the mean-periodic functions on $\mathbb{R}^{\times}_{+}$ in an appropriate function space. We have a family of maps
\[\mathscr{H}(\{k_i\}_{i=1}^{n\geq n_{\mathcal{S}}}):\mathscr{S}(\mathcal{O}_k)\rightarrow\text{MP}(\mathfrak{X})\]
\[\mathcal{S}\mapsto h(\mathcal{S},\{k_i\}_{i=1}^{n}):\mathbb{R}^{\times}_{+}\rightarrow\mathbb{C}.\]
An open problem is to find the image of the maps $\mathscr{H}$. 
\newline There is a completely analogous theorem over function fields, which we will not pursue here. In this case it is known that mean-periodicity would follow from rationality of the boundary term. 
\subsection{Mean-Periodicity for Arithmetic Surfaces}
In practice, we will only work with proper, regular models $\mathcal{S}$ of smooth, projective geometrically connected algebraic curves $C$ over a number fields $k$. As discussed in 2.4, we have
\[\xi(\mathcal{S},s)=\pm\xi(\mathcal{S},2-s)\Leftrightarrow\xi(C,s)=\pm\xi(C,2-s).\]
To remove the ambiguity of the sign, we will consider the squares\footnote{Another reason for doing this is that it is the square of the zeta function which admits an interpretation in terms of two-dimensional adeles, which was put forward as a way of proving the mean-periodicity condition in \cite{F3}, \cite{MeZetaintegrals}.}:
\[\xi(\mathcal{S},s)^2=\xi(\mathcal{S},2-s)^2\Leftrightarrow\xi(C,s)^2=\xi(C,2-s)^2.\]
$\xi(C,s)^2$ is the completion of the squared Hasse--Weil zeta function of $C$, which is a quotient of Dirichlet series
\[\zeta(C,s)^2=\frac{\zeta(k,s)^2\zeta(k,s-1)^2}{L(C,s)^2},\]
however the expected functional equations of the numerator and denominator are not of the correct form to apply the mean-periodicity correspondence. Indeed, put
\[\mathscr{L}_1(C,s)=\zeta(k,s)^2\zeta(k,s-1)^2,\]
\[\mathscr{L}_2(C,s)=L(C,s)^2,\]
then we have
\[\gamma_1(s)=\Gamma(k,s)^2\Gamma(k,s-1)^2\]
\[\gamma_2(s)=A(C)^s\Gamma(E,s)\]
such that, if $\widehat{\mathscr{L}_i}(C,s)=\gamma_i(s)\mathscr{C}(s)$, for $i=1,2$, we expect\footnote{In the first case, this is not just an expectation, but a theorem.}
\[\widehat{\mathscr{L}_1}(C,s)=\widehat{\mathscr{L}_1}(C,2-s),\]
\[\widehat{\mathscr{L}_2}(C,s)=\widehat{\mathscr{L}_2}(C,2-s).\]
To resolve this, we need to rescale $s$ so that the functional equation are with respect to $s\mapsto1-s$. To that end, we will in fact consider the following quotient of Dirichlet series
\[\zeta(C,s/2+1/4)^2.\]
The next definition will be used in sections 3, 4 and 5.
\begin{definition}
Let $C$ be a smooth projective curve over a number field $k$, then define
\[h(C,\{k_i\},x):\mathbb{R}^{\times}_{+}\rightarrow\mathbb{C}\]
\[h(C,\{k_i\},x)=f(C,\{k_i\},x)-x^{-1}f(C,\{k_i\},x^{-1})\]
where
\[f(C,\{k_i\},x)=\frac{1}{2\pi i}\int_{(c)}\xi(C,s+1/2)^2(\prod_{i=1}^m\xi(k_i,s/2+1/4)^2)x^{-s}ds.\]
\end{definition}
Theorem~\ref{MPgeneral.theorem} reduces to the following.
\begin{corollary}\label{MPMCFE.corollary}
Let $C$ be a smooth, projective curve over a number field $k$.
\begin{enumerate}
\item Assume that $\zeta(C,s)$ admits meromorphic continuation to $\mathbb{C}$ and satisfies the functional equation $\xi(C,s)^2=\xi(C,2-s)^2$, the logarithmic derivative of $L(C,s)$ is an absolutely convergent Dirichlet series in the right half plane $\Re(s)>1$ and there exists a polynomial $P(s)$ such that $P(s)L(C,s)$ is an entire function on $\mathbb{C}$ of order $1$. Then, there exists $m_{C}\in\mathbb{N}$ such that for all sets $\{k_i\}$ of $m\geq m_C$ number field extensions of $k$, $h(C,\{k_i\},x)$ is $\textbf{S}(\mathbb{R}^{\times}_{+})$-mean-periodic.
\item Conversely, suppose that there exists $m_{\mathcal{S}}\in\mathbb{N}$ such that, for some set $\{k_i\}$ of $m_{C}$ finite extensions of $k$, the function $h(C,\{k_i\},x)$ is $\textbf{S}(\mathbb{R}^{\times}_{+})$-mean-periodic and that, for some $\delta>0$ we have $\Gamma(C,2s)\prod_{i=1}^{m_{C}}\xi(k_i,s)\ll|t|^{-1-\delta},$ then $\zeta(C,s)$ (hence $L(C,s)$) admits meromorphic continuation and satisfies the functional equation $\xi(C,s)^2=\xi(C,2-s)^2.$
\end{enumerate}
\end{corollary}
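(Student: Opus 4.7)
The plan is to deduce both parts of the corollary directly from Theorem~\ref{MPgeneral.theorem} after the rescaling outlined in Section~2.6. Set $\mathfrak{Z}(s) := \zeta(C, s+1/2)^2$, with decomposition $\mathscr{L}_1(s) = \zeta(k, s+1/2)^2\zeta(k, s-1/2)^2$ and $\mathscr{L}_2(s) = L(C, s+1/2)^2$. The role of the squaring is exactly to remove the sign ambiguity in the functional equation: the hypothesis $\xi(C,s)^2=\xi(C,2-s)^2$ translates to $\widehat{\mathfrak{Z}}(s)=\widehat{\mathfrak{Z}}(1-s)$, forcing the sign in Theorem~\ref{MPgeneral.theorem} to be $\varepsilon=+1$. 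The Dedekind factors $\xi(k_i,s/2+1/4)^2$ appearing in $f(C,\{k_i\},x)$ each separately satisfy a functional equation $s\mapsto 1-s$ and enjoy the same decay and analytic-continuation properties on vertical lines as the unweighted product $\prod_i\xi(k_i,s)$ of Definition~\ref{boundaryfunction.definition}; the arguments behind Theorem~\ref{MPgeneral.theorem} from \cite{SRF} accommodate this variation, and I treat it as a minor notational adjustment.

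For Part (1), I would verify the four conditions making up ``expected analytic shape'' for $\mathfrak{Z}$: absolute convergence in some right half-plane and meromorphic continuation are classical for $\mathscr{L}_1$ and are the standing hypothesis for $\mathscr{L}_2$; the functional equations of the completions $\widehat{\mathscr{L}}_i$ under $s\mapsto 1-s$ are rescaled versions of the classical equation for $\xi(k,s)\xi(k,s-1)$ and of the assumed $\xi(C,s)^2=\xi(C,2-s)^2$; the existence of polynomials $P$ with $P\widehat{\mathscr{L}}_i$ entire of order one is classical for $i=1$ and is directly assumed for $i=2$; and the log-derivative condition on $\mathscr{L}_2$ is likewise assumed. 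The poles of $\widehat{\mathfrak{Z}}$ come from poles of $\widehat{\mathscr{L}}_1$, located at shifts of $s=0,1$, and from zeros of $\mathscr{L}_2$, which lie in the shifted critical strip of $L(C,s)$, so all lie in a vertical strip $|\Re(s)-1/2|\leq\omega$. Theorem~\ref{MPgeneral.theorem}(1) then yields $\textbf{S}(\mathbb{R}_+^\times)$-mean-periodicity of $h(\mathfrak{Z},\{k_i\},x)=h(C,\{k_i\},x)$ for any sufficiently large collection of number field extensions.

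For Part (2), the assumed bound $\Gamma(C,2s)\prod_i\xi(k_i,s)\ll|t|^{-1-\delta}$ is, after accounting for the shift, precisely the niceness condition of Definition~2.6 for the effective gamma factor $\gamma_1(s)/\gamma_2(s)$ of $\widehat{\mathfrak{Z}}$. Theorem~\ref{MPgeneral.theorem}(2) then delivers meromorphic continuation of $\mathfrak{Z}$ to $\mathbb{C}$ together with a relation $\mathfrak{Z}(s)=\pm\mathfrak{Z}(1-s)$; the sign must be $+1$ because $\mathfrak{Z}$ is a square, and undoing the shift returns $\xi(C,s)^2=\xi(C,2-s)^2$, whence meromorphic continuation of $\zeta(C,s)$ and $L(C,s)$ follows from that of its square.

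The step requiring the most care is the matching between the rescaled and squared Dedekind factors $\xi(k_i,s/2+1/4)^2$ of Definition~2.8 and the unweighted Dedekind product appearing in Theorem~\ref{MPgeneral.theorem}; in particular one must check that every shift convention and every exponent interacts consistently so that the functional equation and the Mellin--Carleman analysis used in \cite[Theorem~5.18]{SRF} go through verbatim. This is essentially bookkeeping, but it is the only non-formal aspect of the reduction.
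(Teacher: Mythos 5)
You adopt the same strategy the paper intends: specialize Theorem~\ref{MPgeneral.theorem} to the rescaled $\mathfrak{Z}(s)=\zeta(C,s+1/2)^2$ and verify its hypotheses. The paper gives no more detail than ``Theorem~\ref{MPgeneral.theorem} reduces to the following,'' so your systematic verification of the four conditions of ``expected analytic shape'' is the right kind of elaboration; you correctly pick out $\mathscr{L}_1,\mathscr{L}_2$, the role of the squaring in killing the sign $\pm$, and the fact that the $\xi(k_i,s/2+1/4)^2$ factors of Definition~2.8 stand in for the $\prod_i\xi(k_i,s)$ of the abstract framework, with both families satisfying functional equations under $s\mapsto1-s$.

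There is, however, a gap in Part~(2) that you should not defer as ``essentially bookkeeping.'' You assert that the Corollary's hypothesis $\Gamma(C,2s)\prod_i\xi(k_i,s)\ll|t|^{-1-\delta}$ is ``precisely'' the niceness condition of Definition~2.6 applied to $\gamma_1/\gamma_2$. But $\gamma_2(s)$ carries $\Gamma(C,\cdot)^2$, so the quotient $\gamma_1(s)/\gamma_2(s)$ contains $1/\Gamma(C,\cdot)^2$, which by Stirling grows \emph{exponentially} on vertical lines. The whole content of the niceness requirement --- and the reason $m_C$ must be taken large --- is that the exponential decay of $\gamma_1(s)\prod_i\xi(k_i,s)$ must overwhelm that growth. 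The bound as printed, with $\Gamma(C,2s)$ in the numerator, decays exponentially on its own and is thus vacuously satisfied even for $m_C=0$; it cannot coincide with niceness of $\gamma_1/\gamma_2$. There is evidently a slip in the printed Corollary (most plausibly $\Gamma(C,2s)$ should be inverted, or $\gamma_1$ and $\gamma_2$ have been transposed), and the place you would have caught it is exactly the matching you declined to carry out. A milder instance of the same caution: condition~(3) of ``expected analytic shape'' is about $P(s)\widehat{\mathscr{L}}_2(s)=P(s)\Lambda(C,s+1/2)^2$, not $P(s)L(C,s)$; passing from $L$ to $\Lambda$ requires the gamma-factor poles to be cancelled by trivial zeros of $L(C,s)$, and this should be argued from the stated hypotheses rather than absorbed silently.
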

We are lead to make the following conjecture:
\begin{conjecture}\label{mphypothesis.conjecture}
Let $C$ be a smooth, projective curve over a number field $k$. There exists $n_C$ such that, for any finite set $\{k_i\}$ of $n\geq n_C$ finite number field extensions of $k$, $h(C,\{k_i\},x)$ is $\textbf{S}(\mathbb{R}^{\times}_{+})$-mean-periodic.
\end{conjecture}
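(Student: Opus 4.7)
The plan is to reduce the conjecture to the established analytic shape of Hasse--Weil $L$-functions and then invoke Corollary~\ref{MPMCFE.corollary}(1). That corollary guarantees the desired $\mathbf{S}(\mathbb{R}_+^{\times})$-mean-periodicity as soon as $\zeta(C,s)$ admits meromorphic continuation to $\mathbb{C}$, satisfies the squared functional equation $\xi(C,s)^2 = \xi(C,2-s)^2$, has logarithmic derivative of $L(C,s)$ that is an absolutely convergent Dirichlet series in $\Re(s) > 1$, and admits a polynomial $P(s)$ such that $P(s) L(C,s)$ is entire of order one. Thus the first step is to verify these four analytic properties for the given curve $C$.

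Among them, the logarithmic derivative condition is immediate from the Euler product in the region of absolute convergence, and the order-one growth condition is standard for motivic $L$-functions, expected from the general shape hypotheses of Section~2.5. The substantive content lies in meromorphic continuation and the functional equation. The canonical route to both is via automorphicity of $L(C,s) = L(J(C),s)$: one associates to the $2g$-dimensional $\ell$-adic Galois representation on $H^1_{\text{\'{e}t}}(C, \mathbb{Q}_\ell)$ a cuspidal automorphic representation $\pi$ of $\GL_{2g}(\mathbb{A}_k)$ with matching $L$-function, after which meromorphic continuation, functional equation, and growth all follow from Godement--Jacquet together with the standard theory of $L$-functions on $\GL_n$. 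Corollary~\ref{MPMCFE.corollary}(1) then closes the argument, yielding mean-periodicity for any sufficiently large set $\{k_i\}$.

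An alternative strategy, foreshadowed in the footnote to Section~2.6, would be to bypass automorphicity and establish mean-periodicity directly through two-dimensional adelic analysis on the arithmetic surface $\mathcal{S}$. In that approach, one interprets $\zeta(\mathcal{S},s)^2$ as a two-dimensional zeta integral over the adele ring of $\mathcal{S}$ and $h(C,\{k_i\},x)$ as a boundary term arising from the failure of the domain of integration to be invariant under the natural involution; mean-periodicity would then emerge from structural properties of this adelic integral rather than from any reciprocity law.

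The principal obstacle is automorphicity itself. For $g=1$, modularity is known over $\mathbb{Q}$ and over totally real fields by the work of Wiles--Taylor, Breuil--Conrad--Diamond--Taylor, and their successors, so in those cases Conjecture~\ref{mphypothesis.conjecture} reduces to the verification of the technical hypotheses of Corollary~\ref{MPMCFE.corollary}(1). For $g \geq 2$, or for general number fields, automorphicity of the Jacobian is open and is one of the central problems of the Langlands programme, while the direct adelic route would require a substantially more developed theory of integration on two-dimensional arithmetic schemes than is presently available. Accordingly, Conjecture~\ref{mphypothesis.conjecture} is best viewed as a precise weakening of automorphicity, and the remainder of the paper is aimed at clarifying the relationship between these two conditions rather than proving the conjecture outright.
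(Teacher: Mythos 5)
The statement you were assigned is labelled a \emph{conjecture} in the paper, and the paper offers no proof of it; there is nothing to compare your argument against. What you have written is not a proof either, and you say so explicitly: you reduce the conjecture to the expected automorphicity of $L(J(C),s)$ via Corollary~\ref{MPMCFE.corollary}(1), and you correctly identify automorphicity as the central open obstruction for $g\geq 2$ and for general base fields. That framing is exactly the paper's own: Conjecture~\ref{mphypothesis.conjecture} is put forward as an intermediate, presumably weaker, condition than automorphicity, and the subsequent sections (Theorems~\ref{MPCM.theorem} and~\ref{MPGL.theorem}) make the conditional direction precise by constructing explicit convolutors $\mathcal{W}_\pi\subset\mathcal{T}(h_C)^{\perp}$ from matrix coefficients of $\pi$. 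Your mention of the two-dimensional adelic route is also accurate as a paraphrase of the footnote in Section~2.6. One small caution worth recording: even granting automorphicity, checking the hypothesis of Corollary~\ref{MPMCFE.corollary}(1) that the logarithmic derivative of $L(C,s)$ is an absolutely convergent Dirichlet series in the stated half-plane depends on bounds for the Satake parameters (a Ramanujan-type input) which are not automatic from cuspidality alone for $\GL_{2g}$; this is a genuine technical point, not a gap in your reasoning, but it should not be waved through as ``immediate from the Euler product''. In sum, your text is a correct account of the conjecture's status and its intended relationship to the rest of the paper, not a proof, and none was to be expected.
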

\subsection{Expansion of Boundary Function}
In the following sections, we will specifically work with that case $n=1$, $k_1=k$. We will label this definition for future reference.
\begin{definition}\label{hC.definition}
Let $C$ be a smooth projective curve over $k$, define
\[h_C(x):\mathbb{R}^{\times}_{+}\rightarrow\mathbb{C}\]
\[h_C(x):=h(C,\{k\},x).\]
\end{definition}
In this paper we will show that the $\textbf{S}(\mathbb{R}^{\times}_{+})$-mean-periodicity of $h_C(x)$ can be deduced from the expected automorphicity of $C$. $h_C$ is defined in terms of the inverse Mellin transform of a product of zeta functions. This leads to the following expansion of $h_C(x)$.
\begin{lemma}\label{expansion.lemma}
\[h_{C}(x)=\lim_{T\rightarrow\infty}\sum_{\text{Im}(\lambda)\leq T}\sum_{m=0}^{m_{\lambda}-1}C_{m+1}(\lambda)\frac{(-1)^{m}}{m!}x^{-\lambda}(\log(x)^m),\]
where $\lambda$ runs over the poles of $\xi(k,\frac{s}{2}+\frac{1}{4})^2\xi(C,s+\frac{1}{2})^2$, $m_{\lambda}$ denotes the multiplicity of $\lambda$ and $C_{m}(\lambda)$ is the coefficient of $(s-\lambda)^{-m}$ in the principal part of $\xi(k,\frac{s}{2}+\frac{1}{4})^2\xi(C,s+\frac{1}{2})^2$ at $s=\lambda$.
\end{lemma}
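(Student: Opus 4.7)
The plan is a standard residue-theorem-plus-functional-equation argument. Write $F(s) := \xi(C,s+1/2)^2\,\xi(k,s/2+1/4)^2$, so that $f(C,\{k\},x) = \frac{1}{2\pi i}\int_{(c)} F(s)\,x^{-s}\,ds$. The rescaling was chosen precisely so that the two functional equations $\xi(C,s)^2=\xi(C,2-s)^2$ and $\xi(k,s)=\xi(k,1-s)$ combine into $F(s)=F(1-s)$. Substituting $s\mapsto 1-s$ in the integral representation of $x^{-1}f(C,\{k\},x^{-1}) = \frac{1}{2\pi i}\int_{(c)} F(s)\,x^{s-1}\,ds$ and invoking this symmetry, one gets $x^{-1}f(C,\{k\},x^{-1}) = \frac{1}{2\pi i}\int_{(1-c)} F(s)\,x^{-s}\,ds$. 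Subtracting,
\[
h_C(x) \;=\; \frac{1}{2\pi i}\Bigl(\int_{(c)} - \int_{(1-c)}\Bigr) F(s)\,x^{-s}\,ds.
\]

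Next I would close this difference of vertical contours into a rectangular box $R_T$ with horizontal sides at $\Im(s)=\pm T$. Since the assumption in the definition of ``expected analytic shape'' locates all poles of $F$ in the strip $|\Re(s)-1/2|\le\omega$ with $\omega < c-1/2$, every pole $\lambda$ of $F$ with $|\Im(\lambda)|\le T$ is captured inside $R_T$, and the residue theorem gives
\[
\frac{1}{2\pi i}\oint_{R_T} F(s)\,x^{-s}\,ds \;=\; \sum_{|\Im(\lambda)|\le T}\mathrm{Res}_{s=\lambda} F(s)\,x^{-s}.
\]
A direct computation using the Laurent expansion $F(s)=\sum_{m=1}^{m_\lambda} C_m(\lambda)(s-\lambda)^{-m} + \text{(holomorphic)}$ and $x^{-s} = x^{-\lambda}\sum_{j\ge 0}\frac{(-\log x)^j}{j!}(s-\lambda)^j$ shows
\[
\mathrm{Res}_{s=\lambda} F(s)\,x^{-s} \;=\; \sum_{m=0}^{m_\lambda-1} C_{m+1}(\lambda)\,\frac{(-1)^m}{m!}\,x^{-\lambda}(\log x)^m,
\]
which is exactly the inner summand in the claimed expansion.

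The one remaining step, and the main obstacle, is to verify that the two horizontal pieces of $\partial R_T$ contribute $0$ in the limit $T\to\infty$, so that the contour integral over $R_T$ coincides with the difference $\int_{(c)}-\int_{(1-c)}$ up to $o(1)$. This is precisely the kind of vertical-strip decay controlled by the ``nice'' hypothesis of Corollary~\ref{MPMCFE.corollary}: Stirling applied to the archimedean gamma factors makes $|\gamma(s)|$ decay exponentially in $|\Im(s)|$ uniformly on $a\le\Re(s)\le b$, while convexity (Phragm\'en--Lindel\"of) bounds for $\zeta(k,\cdot)$ and for $L(C,\cdot)$ — available because $P(s)\widehat{\mathscr{L}}_i(s)$ is entire of order one — give at worst polynomial growth of the Dirichlet series factors. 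Together these estimates dominate the factor $x^{-s}$ on the horizontals and force the horizontal integrals to vanish as $T\to\infty$, provided one chooses a subsequence $T_n\to\infty$ avoiding vertical neighbourhoods of the imaginary parts of the poles (which is possible since poles form a discrete set). Passing to this limit in the identity above yields the expansion in the statement of the lemma.
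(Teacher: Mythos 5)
Your proposal is correct and is exactly the approach the paper sketches (the paper only gestures at ``taking the limit of integrals over rectangles'' and defers the details to \cite[Sections~4,~5]{SRF}); you have correctly supplied the contour shift via the functional equation $F(s)=F(1-s)$, the residue computation, and the horizontal estimate. The one point worth sharpening in your last paragraph is that bounding the horizontal pieces also requires a \emph{lower} bound on $|\Lambda(C,\cdot)|$ along the lines $\Im(s)=\pm T_n$, since it appears squared in the denominator of $F$; this is where the order-one hypothesis does real work, via Hadamard factorization and a zero-density pigeonhole to choose $T_n$ so that $1/|\Lambda|$ grows only polynomially there --- merely avoiding neighbourhoods of the poles does not by itself give the quantitative control needed to win against $x^{-s}$.
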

This result can be found in \cite[Sections~4,~5]{SRF}. We sketch the proof for intuition.
\begin{proof}
The inverse Mellin transform is a contour integral. We may evaluate this by taking the limit of integrals over rectangles in the complex plane, it is these integrals which provide the expressions in the sum.
\end{proof}
\begin{remark}
Properties of the function $h_C$ are also related to other important and far from understood properties of $L(C,s)$. For example, the GRH is studied in \cite{POCFAWAOES}, following the outline in \cite{F3}.
\end{remark}
\section{Hecke Characters}\label{Hecke.section}
In the following section, we will construct convolutors for $h_C$ from the automorphic representations associated with algebraic curves. The technique will not differ greatly from that of this section, which we include so as to demonstrate the method in the most simple context, that of CM elliptic curves. Unlike what follows, the results of this section are unconditional on account of the fact that the analytic and automorphic properties were deduced decades ago (see \cite[Chapter~2]{S}). Underlying the results described here is a result about Hecke characters, which could be useful for future examination of the twisted mean-periodicity correspondence and converse theorems for meromorphic continuation. 
\newline Let $k$ be a number field, and recall that $k^{\times}$ diagonally embeds into $\mathbb{A}_k^{\times}$. We will use the adelic formulation of Hecke characters.
\begin{definition}
A Hecke character of a number field $k$ is a continuous homomorphism $\mathbb{A}_k^{\times}\rightarrow\mathbb{C}$ which is trivial on $k^{\times}$.
\end{definition}
Let $E$ be an elliptic curve over $k$. There is always an inclusion of the ring $\mathbb{Z}$ of integers into the endomorphism ring $\text{End}(E)$ defined by \[n\mapsto[n]:P\mapsto nP=\overbrace{P+\dots+P}^{n\text{ times}}.\]
In most cases, this inclusion is actually an equality. In general, it is easy to deduce that $\text{End}(E)$ is isomorphic to $\mathbb{Z}$ or an order $\mathcal{O}$ in a quadratic imaginary field $F$ \cite[Chapter~2]{S}. When $\text{End}(E)=\mathcal{O}$, we say that $E$ has complex multiplication (``CM'') by $\mathcal{O}$ and we have an isomorphism of $\mathbb{Q}$-algebras $F\cong\text{End}(E)\otimes_{\mathbb{Z}}\mathbb{Q}$, motivating the terminology ``CM by $F$''.
\newline Let $E$ be an elliptic curve over a number field $k$ with CM by $F$, which we assume, for now, to be a subfield of $k$.\footnote{The case in which $K$ is not a subfield of $k$ is completely similar.} By the first main theorem of CM, \cite[Chapter~2,~Theorem~8.2]{S}, for all $x\in\mathbb{A}_k^{\times}$ there is a unique $\alpha_x\in F^{\times}$ such that $\alpha_x\mathcal{O}_F$ is the principal fractional ideal generated by $s=N_F^kx\in\mathbb{A}_F^{\times}$ and, for all fractional ideals $\mathfrak{a}\subset F$ and analytic isomorphisms
\[f:\mathbb{C}/\mathfrak{a}\rightarrow E(\mathbb{C}),\]
the following diagram commutes
\[\xymatrix{F/\mathfrak{a}\ar@{->}^{\alpha_xs^{-1}}[r]\ar@{->}_f[d] & F/\mathfrak{a}\ar@{->}^{f~~~~,}[d] \\
E(k^{\text{ab}})\ar@{->}^{r_k(x)}[r] & E(k^{\text{ab}})
 }\]
where $r_k:\mathbb{A}_k^{\times}\rightarrow\text{Gal}(k^{\text{ab}}/k)$ is the reciprocity map. Thus we have a homomorphism
\[\alpha_{E/k}:\mathbb{A}_k^{\times}\rightarrow F^{\times}\hookrightarrow\mathbb{C}^{\times}\]
\[\alpha_{E/k}(x)=\alpha_x\]
The Hecke character associated to the CM elliptic curve $E/k$ is then defined as
\[\psi_{E/k}:\mathbb{A}_k^{\times}\rightarrow\mathbb{C}^{\times}\]
\[\psi_{E/k}(x)=\alpha_{E/k}(x)N^k_F(x^{-1})_{\infty},\]
where, $N^k_F:\mathbb{A}_k^{\times}\rightarrow\mathbb{A}_F^{\times}$ denotes the idelic norm map
\[(\alpha_v)\mapsto((\prod_{w|v}N^{k_w}_{F_v}\alpha_w)_v),\] 
and, for any idele $\beta\in\mathbb{A}_F^{\times}$, $\beta_{\infty}\in\mathbb{C}^{\times}$ is the component of $\beta$ corresponding to the unique absolute value on $K$ (which is a quadratic imaginary field).
\newline The $L$-function of an elliptic curve $E$ over a number field $k$ with CM by the quadratic imaginary field $F$ is given by
\[L(E,s)=\begin{cases}
L(s,\psi_{E/k})L(s,\overline{\psi_{E/k}}), & \text{if }F\subseteq k,\\
L(s,\psi_{E/k'}), & \text{otherwise,}
\end{cases}\]
where $k'=kF$ in the second case.
\newline On the locally compact group $\mathbb{A}_k$ we have a Haar measure inducing surjective ``module'' map on the idele group $\mathbb{A}_k^{\times}$:
\[|~|:\mathbb{A}_k^{\times}\rightarrow\mathbb{R}_{+}^{\times},\]
which may be chosen so that $|x|=1$, for all $x\in k^{\times}$.
\newline For a positive real number $x\in\mathbb{R}_{+}^{\times}$, we introduce the following notation:
\[(\mathbb{A}_k^{\times})_x:=\{\alpha\in\mathbb{A}_k^{\times}:|\alpha|=x\}.\]
Let $S(\mathbb{A}_k)$ be the adelic Schwartz-Bruhat space, that is, factorizable functions $f:\mathbb{A}_k\rightarrow\mathbb{C}$ whose archimedean components are Schwartz and whose non-archimedean components are locally constant, compactly supported and almost always the characteristic function of the maximal compact subring.
\newline For a Hecke character $\chi:\mathbb{A}_k^{\times}\rightarrow\mathbb{C}^{\times}$, we can define a map:
\[Z(\chi,\cdot):S(\mathbb{A}_k^{\times})\rightarrow\textbf{S}(\mathbb{R}^{\times}_{+})\]
\[f\mapsto Z(\chi,f),\]
where
\[Z(\chi,f)(x)=\int_{(\mathbb{A}_k^{\times})_x}f(\alpha)\chi(\alpha)d\alpha.\]
This integral converges absolutely and its image is contained in $\textbf{S}(\mathbb{R}^{\times}_{+})$ because for any integer $N$ there exists a positive constant $C$ such that, for all $x\in\mathbb{R}^{\times}_{+}$,
\[|Z(\chi,f)(x)|\leq Cx^{-N}.\]
We will denote the image of $Z(\chi,f)$ by $\mathcal{V}_{\chi}\subseteq\textbf{S}(\mathbb{R}^{\times}_{+})$.
\newline There is a well known relationship between $Z(\chi,f)(x)$ and the Hecke $L$-function $L(s,\chi)$, namely, there exists a non-zero holomorphic function $\Psi_{f,\chi}(s)$ depending on $f$ and $\chi$ such that
\[\int_0^{\infty}Z(\chi,f)(x)x^s\frac{dx}{x}=\int_{\mathbb{A}_k^{\times}}f(\alpha)\chi(\alpha)|\alpha|^sd^{\times}\alpha=\Psi_{f,\chi}(s)L(s,\chi),\]
where $d^{\times}\alpha$ is the measure on the idele group $\mathbb{A}_k^{\times}$. In definition~\ref{hC.definition} we introduced the strong Schwartz function $h_C(x):\mathbb{R}^{\times}_{+}\rightarrow\mathbb{C}$, which we will now study in the case $C=E$, an elliptic curve over $k$, with CM by $F$. Let $\mathcal{T}(h_{E})$ and $\mathcal{T}(h_{E})^{\perp}$ be as in $2.1$ and recall that, if the set of non-zero convolutors $\mathcal{T}(h_{E})^{\perp}\backslash\{0\}$ is non-empty, then $h_{E}$ is mean-periodic. In this section, we will construct a non-trivial subset of $\mathcal{T}(h_{E})^{\perp}$ from $\mathcal{V}_{\chi}$, where $\chi=\psi_{E/k}$ is the associated Hecke character. The fact that the boundary term is mean-periodic could already have been established from the analytic properties it implies, however the explicit connection to Hecke characters is new. First, define
\[w_0(x)=\frac{1}{2\pi i}\int_{(c)}A(E)^{s}\Gamma(E,s)^2s^4(s-2)^4(s-1)^2x^{-s}ds,\]
where $A(E)^{s}\Gamma(E,s)^2$ is the product of the gamma factor and conductor appearing in the functional equation of $\Lambda(E,s)$. If $\psi_{E/k}$ denotes the Hecke character associated to the CM elliptic curve $E$ over $k$ and $k'=kK$, then define
\[\mathcal{W}_{\psi_{E/k}}=\begin{cases}
w_0\ast\mathcal{V}_{\psi_{E/k}}\ast\mathcal{V}_{\psi_{E/k}}\ast\mathcal{V}_{\overline{\psi}_{E/k}}\ast\mathcal{V}_{\overline{\psi}_{E/k}}, & K\subset k \\
w_0\ast\mathcal{V}_{\psi_{E/k'}}\ast\mathcal{V}_{\psi_{E/k'}}, & \text{otherwise.}
 \end{cases}\]
where we use the following shorthand:
\[\mathcal{U}\ast\mathcal{V}:=\overline{\text{Span}_{\mathbb{C}}\{u\ast v:u\in\mathcal{U},v\in\mathcal{V}\}}.\]
\begin{theorem}\label{MPCM.theorem}
Let $E/k$ be an elliptic curve with CM by $K$, then 
\[\mathcal{W}_{\psi_{E/k}}\subset\mathcal{T}(h_E)^{\perp},\]
where $\psi_{E/k}$ is the Hecke character associated to $E$ and $\mathcal{W}_{\psi_{E/k}}$ is as above.
\end{theorem}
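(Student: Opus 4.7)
The plan is to translate the orthogonality $\mathcal{W}_{\psi_{E/k}} \subset \mathcal{T}(h_E)^{\perp}$ into the vanishing of the Mellin transform $M(w)$ at each pole $\lambda$ of $M(h_E)$ with multiplicity $m_\lambda$, and then to verify this via the Tate-integral formula for $L(s,\psi_{E/k})$. Since convolution on $\mathbb{R}_{+}^{\times}$ commutes with $\tau_a^{\times}$ and is bilinear and continuous on $\textbf{S}(\mathbb{R}_{+}^{\times})$, showing $g \ast w = 0$ for all $g \in \mathcal{T}(h_E)$ and $w \in \mathcal{W}_{\psi_{E/k}}$ reduces to proving $h_E \ast w = 0$ for $w = w_0 \ast Z(\chi_1, f_1) \ast \cdots \ast Z(\chi_r, f_r)$, with the $\chi_i$ running over the characters appearing in $\mathcal{W}_{\psi_{E/k}}$.

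Next, I would expand $h_E$ by Lemma~\ref{expansion.lemma} and compute $(x^{-\lambda}(\log x)^m \ast w)(z)$ via the binomial theorem, obtaining
\begin{equation*}
(x^{-\lambda}(\log x)^m \ast w)(z) = z^{-\lambda} \sum_{k=0}^{m}\binom{m}{k}(-1)^{k}(\log z)^{m-k}\, M(w)^{(k)}(\lambda).
\end{equation*}
After justifying the interchange of the sum over $\lambda$ and convolution with $w$ (by the strong Schwartz decay of $w$ and the uniform contour estimates underlying Lemma~\ref{expansion.lemma}), the identity $h_E \ast w = 0$ becomes equivalent to $M(w)^{(k)}(\lambda)=0$ for $0\leq k < m_\lambda$ at every pole $\lambda$ of $\xi(k,s/2+1/4)^{2}\xi(C,s+1/2)^{2}$.

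The verification is then computational. By multiplicativity of the Mellin transform under multiplicative convolution, together with $M(w_0)(s) = A(E)^s \Gamma(E,s)^{2}s^{4}(s-2)^{4}(s-1)^{2}$ and the Tate identity $M(Z(\chi_i,f_i))(s)=\Psi_{f_i,\chi_i}(s)L(s,\chi_i)$, one gets
\begin{equation*}
M(w)(s) = \Lambda(E,s)^{2} \cdot s^{4}(s-2)^{4}(s-1)^{2} \cdot \Psi(s),
\end{equation*}
where $\Psi := \prod_i \Psi_{f_i,\chi_i}$ is holomorphic and the identity $L(E,s) = L(s,\psi_{E/k})L(s,\overline{\psi_{E/k}})$ (resp.\ $L(E,s)=L(s,\psi_{E/kF})$ when $F \not\subset k$) recognises the product of four (resp.\ two) Hecke $L$-functions as $L(E,s)^{2}$, absorbed with the gamma factor and conductor into $\Lambda(E,s)^{2}$. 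The poles of $M(h_E)$ split into two families: those at $s \in \{-\tfrac12,\tfrac12,\tfrac32\}$ coming from the Dedekind factors, which are cancelled (after accounting for the shift $s \mapsto s+\tfrac12$) by the polynomial factor $s^{4}(s-2)^{4}(s-1)^{2}$; and those at $s = \rho-\tfrac12$ coming from zeros of $\Lambda(E,s+\tfrac12)$, cancelled with the correct doubled multiplicity by the factor $\Lambda(E,s)^{2}$.

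The principal obstacle is the careful bookkeeping of multiplicities together with the variable shift between the Hasse--Weil variable natural to $\Lambda(E,\cdot)$ and the zeta variable natural to $h_E$; this matching is precisely what forces the specific exponents appearing in $w_0$, the squaring of the $\mathcal{V}_{\chi}$ factors, and the inclusion of both $\psi_{E/k}$ and $\overline{\psi_{E/k}}$ twists. The secondary obstacle is the analytic justification of the termwise convolution in the second step, which requires standard polynomial-growth estimates for $\xi(k,\cdot)$ and $\Lambda(E,\cdot)$ in vertical strips and the rapid decay of strong Schwartz functions.
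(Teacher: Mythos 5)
Your proposal is correct and follows essentially the same route as the paper's proof: expand $h_E$ via Lemma~\ref{expansion.lemma}, convolve a generator $w = w_0 \ast Z(\chi_1,f_1) \ast \cdots$ against each exponential-polynomial term using the binomial identity to reduce the claim to $M(w)^{(k)}(\lambda)=0$ for $k < m_\lambda$, then compute $M(w)$ multiplicatively via the Tate integral identity and the CM factorization $L(E,s)=L(s,\psi_{E/k})L(s,\overline{\psi_{E/k}})$ to recognize the product as $\Lambda(E,\cdot)^2$ times the polynomial and a holomorphic $\Psi$. You are somewhat more explicit than the paper about the two analytic points it leaves implicit (justifying the termwise convolution, and the shift $s \mapsto s+\tfrac12$ between the zeta variable in $h_E$ and the argument of $\Lambda(E,\cdot)$, which the paper compresses into the phrase ``upon rescaling $\lambda$''), but the substance is identical.
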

\begin{proof}
First assume that $F\subset k$, we will put $\psi:=\psi_{E/k}$. Let $w$ be any function in $\mathcal{W}_{\psi}$ - we will show that its convolution with the boundary function is $0$.
\newline By lemma~\ref{expansion.lemma}, we have
\[h_E(x)=\lim_{T\rightarrow\infty}\sum_{\text{Im}(\lambda)\leq T}\sum_{m=0}^{m_{\lambda}-1}C_{m+1}(\lambda)\frac{(-1)^{m}}{m!}f_{\lambda,m}(x),\]
where 
\[f_{\lambda,m}(x)=x^{-\lambda}(\log(x)^m),\]
and the poles $\lambda$ of $\xi(k,\frac{s}{2}+\frac{1}{4})^2\xi(E,s+\frac{1}{2})^2$ must be one of the following three cases:
\begin{enumerate}
\item A pole of $\xi(k,s/2)$,
\item A pole of $\xi(k,s)\xi(k,s-1)$, the numerator of $\xi(E,s)$,
\item A zero of $\Lambda(E,s)$, the denominator of $\xi(E,s)$.
\end{enumerate}
In the first two cases, we can place the poles as occurring at $0,1,2$. Using this expansion, we have, for $W\in\mathcal{W}_{\psi}$,
\[W\ast h_{E}(x)=\lim_{T\rightarrow\infty}\sum_{\text{Im}(\lambda)\leq T}\sum_{m=0}^{m_{\lambda}-1}C_{m+1}(\lambda)\frac{(-1)^{m}}{m!}W\ast f_{\lambda,m}(x).\]
We evaluate each convolution in this series. Firstly,
\[W\ast f_{\lambda,m}(x)=\int_0^{\infty}W(y)f_{\lambda,m}(x/y)\frac{dx}{x}.\]
It is elementary to deduce that
\[W\ast f_{\lambda,m}(x)=\sum_{i=1}^m(-1)^j\binom{m}{j}x^{-\lambda}(\log(x))^{m-j}\int_0^{\infty}W(y)y^{\lambda}(\log(y))^j\frac{dy}{y}.\]
Next we observe that
\[\int_0^{\infty}W(y)y^{\lambda}(\log(y))^j\frac{dy}{y}=\frac{d^j}{d\lambda^j}\int_0^{\infty}W(y)y^{\lambda}\frac{dy}{y}.\]
$W$ is a limit of a linear combination of functions of the form 
\[w=w_0\ast Z(\psi,f_1)\ast Z(\psi,f_2)\ast Z(\overline{\psi},f_3)\ast Z(\overline{\psi},f_4),\]
where, for $i=1,\dots,4$, $f_i\in S(\mathbb{A}_F)$. For such a function
\[\int_{0}^{\infty}w(y)y^{\lambda}\frac{dy}{y}=\int_0^{\infty}w_0(y)y^{\lambda}\frac{dy}{y}\prod_{i=1}^2\int_{0}^{\infty}Z(\psi,f_i)(y^{\lambda})\frac{dy}{y}\prod_{i=3}^4\int_0^{\infty}Z(\overline{\psi},f_i)(y^{\lambda})\frac{dy}{y}.\]
Recall that, for each $i=1,\dots,4$ there exists a holomorphic function $\Psi_{f_i,\psi_i}$ such that, for a Hecke character $\chi$,
\[\int_0^{\infty}Z(\chi,f_i)(y^{\lambda})\frac{dy}{y}=\Psi_{f_i,\psi_i}(\lambda)L(\lambda,\psi_i).\]
Finally, observe that, by Mellin inversion
\[\int_0^{\infty}w_0(y)y^{\lambda}\frac{dy}{y}=\Gamma(E,\frac{\lambda}{2})^2\lambda^4(\lambda-2)^4(\lambda-1)^2.\]
Altogether
\[\int_{0}^{\infty}w(y)y^{\lambda}\frac{dy}{y}=A(E)^{\lambda}\Gamma(E,\lambda)^2\lambda^4(\lambda-2)^4(\lambda-1)^2L(E,\lambda)^2\prod_{i=1}^4\Psi_{f_i,\psi_i}(s)\]
\[=\Lambda(E,\lambda)^2\lambda^4(\lambda-2)^4(\lambda-1)^2\prod_{i=1}^4\Psi_{f_i,\psi_i}(\lambda).\]
By the classification of poles above, upon rescaling $\lambda$, this expression is $0$ whenever $\lambda$ is a pole. The case $F\not\subset k$ is completely similar.
\end{proof}
In the more general framework of the next section we will develop some inclusions in the opposite direction. We will also study an equivalent adelic construction.
\section{$\GL_n$}\label{GL.section}
Let $C$ be a smooth projective curve of genus $g$ over a number field $k$. The $L$-function of $C$ is expected to arise as the $L$-function of a GL$_{2g}(\mathbb{A}_k)$ automorphic representation. Our aim is to investigate the relationship between mean-periodicity and automorphicity.
\newline \textit{In this section, we will assume that there exists an algebraic cuspidal automorphic representation }$\pi$\textit{ of }$\GL_{2g}(\mathbb{A}_k)$\textit{ such that}
\[L(C,s)=L(\pi,s-\frac{1}{2}).\]
We will denote GL$_{2g}$ by $G$. If
\[G(\mathcal{O}):=\prod_{\mathfrak{p}\in\Spec(\mathcal{O}_k)}G(\mathcal{O}_{\mathfrak{p}})\subset G(\mathbb{A}_k),\]
where 
\[\mathcal{O}_p=\mathcal{O}_{k_{\mathfrak{p}}},\]
then a maximal compact subgroup of $G(\mathbb{A}_k)$ is
\[\mathcal{K}=G(\mathcal{O})\times\overbrace{\text{SO}(2)\times\dots\times\text{SO}(2)}^{r_1}\times\overbrace{\text{SU}(2)\times\cdots\times\text{SU}(2)}^{r_2},\]
where $r_1$ (resp, $r_2$) denotes the number of real (resp. complex) places of $k$.
\newline $G(\mathbb{A}_k)$ operates by right translation on $L^2(G(k)\backslash G(\mathbb{A}_k)):=L^2(G(k)\backslash G(\mathbb{A}_k),1)$, the space of all functions $\phi$ on $G(k)\backslash G(\mathbb{A}_k)$ such that, for $a\in\mathcal{K}$ and $g\in G(\mathbb{A}_k)$:
\[\phi(ag)=\phi(g),\]
\[\int_{G(k)\mathcal{K}\backslash G(\mathbb{A}_k)}|\phi(g)|^2dg<\infty.\]
We will denote by $L_0^2(G(k)\backslash G(\mathbb{A}_k))$ the subspace of cuspidal elements, that is, those functions in $L^2(G(k)\backslash G(\mathbb{A}_k))$ such that for all proper $k$-parabolic subgroups $H$ of $G$, the integral
\[\int_{U(k)\backslash U(\mathbb{A}_k)}\phi(ug)du=0,\]
where $U$ is the unipotent radical of $H$.
\newline An admissible matrix coefficient \[f_{\pi}:G\rightarrow\mathbb{C}\] of the representation $\pi$ of $G(\mathbb{A}_k)$ on $L_0^2(G(k)\backslash G(\mathbb{A}_k))$ has the form:
\[f_{\pi}(g)=\int_{G(\mathcal{O})G(k)\backslash G(\mathbb{A}_k)}\phi(hg)\widetilde{\phi}(h)dh,\]
for cuspidal automorphic forms $\phi,\tilde{\phi}$, the sense of \cite[Chapter~II,~\S10]{ZFSA}. The module $m:G(\mathbb{A}_k)\rightarrow\mathbb{R}^{\times}_{+}$ is defined by:
\[m(g)=|g|:=|\det(g)|_{\mathbb{A}_k},\]
where the module on the right hand side is the usual adelic module.
\newline Let $\Phi$ be a Schwartz-Bruhat function on $M_n(\mathbb{A}_k)$. According to \cite[\S~13]{ZFSA}, we have an integral representation for the automorphic $L$-function
\[\int_{G(\mathbb{A}_F)}\Phi(g)f_{\pi}(g)|g|^sdg=L(\pi,s-\frac{1}{2}),\]
which can be analytically continued to $s\in\mathbb{C}$.
\newline For $x\in\mathbb{R}^{\times}_{+}$, define $G_x$ to be the set $\{g\in G(\mathbb{A}_k):|g|=x\}$. For $\Phi\in S(M_n(\mathbb{A}_k))$ and an admissible matrix coefficient $f_{\pi}$, the following integral converges absolutely
\[\mathscr{Z}(\Phi,f_{\pi}):\mathbb{R}_{+}^{\times}\rightarrow\mathbb{C}\]
\[x\mapsto\mathscr{Z}(\Phi,f_{\pi})(x):=\int_{G_x}\Phi(g)f_{\pi}(g)dg\]
Moreover, for any positive integer $N$, there exists a positive constant $C$ such that, for all $x\in\mathbb{R}^{\times}_{+}$, $|\mathscr{Z}(\phi,f_{\pi})(x)|\leq Cx^{-N}$ and, if $\widehat{\Phi}$ denotes the Fourier transform of $\Phi$ and $\check{f}_{\pi}(g)=f_{\pi}(g^{-1})$, then we have the functional equation
\[\mathscr{Z}(\Phi,f_{\pi})(x)=x^{-2}\mathscr{Z}(\widehat{\Phi},\check{f}_{\pi})(x^{-1}).\]
Let $S(\pi)$ denote the set of pairs $(\Phi,f_{\pi})$, where $\Phi\in S(M_n(\mathbb{A}_k))$ is a Schwartz-Bruhat function and $f_{\pi}$ is an admissible coefficient of $\pi$. The properties above verify that we have defined a map
\[\mathscr{Z}:S(\pi)\rightarrow\textbf{S}(\mathbb{R}_{+}^{\times})\]
\[(\Phi,f_{\pi})\mapsto\mathscr{Z}(\Phi,f_{\pi}).\]
We will denote the image of $\mathscr{Z}$ by $\mathcal{V}_{\pi}\subset\textbf{S}(\mathbb{R}^{\times}_{+})$.
\newline Let
\[w_0(x)=\frac{1}{2\pi i}\int_{(c)}\Gamma(C,s)^2A(C)^ss^4(s-2)^4(s-1)^{2}x^{-s}ds,\]
The following space gives a family of convolutors of the function $h_C(s)$ of definition~\ref{hC.definition}:
\[\mathcal{W}_{\pi}=w_0\ast\mathcal{V}_{\pi}\ast\mathcal{V}_{\pi}:=\overline{\text{Span}_{\mathbb{C}}\{w_0\ast v_1\ast v_2:v_i\in\mathcal{V}_{\pi}\}}.\]
More precisely,
\begin{theorem}\label{MPGL.theorem}
Let $C$ be a smooth projective curve with associated cuspidal automorphic representation $\pi$, and let $h_{C}$ be as in definition~\ref{hC.definition}, then
\[\mathcal{W}_{\pi}\subset\mathcal{T}(h_{C})^{\perp},\]
where $\mathcal{T}(h_{C})^{\perp}$ denotes the set of convolutors $\{g\in\textbf{S}(\mathbb{R}^{\times}_{+}):g\ast\tau=0,\forall\tau\in\mathcal{T}(h_{C})\}$.
\end{theorem}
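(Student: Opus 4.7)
The plan is to mirror the proof of Theorem~\ref{MPCM.theorem}, with the Godement--Jacquet matrix-coefficient integrals $\mathscr{Z}(\Phi, f_\pi)$ taking the role played there by the Hecke zeta integrals $Z(\chi, \cdot)$. By linearity, continuity of convolution on $\textbf{S}(\mathbb{R}^\times_+)$, and the definition of $\mathcal{W}_\pi$ as a closed span, it suffices to treat a single $w$ of the form $w_0 \ast \mathscr{Z}(\Phi_1, f_{\pi,1}) \ast \mathscr{Z}(\Phi_2, f_{\pi,2})$ and to show that $w \ast h_C = 0$; the inclusion $\mathcal{W}_\pi \subset \mathcal{T}(h_C)^\perp$ then follows at once because convolution commutes with the multiplicative translations spanning $\mathcal{T}(h_C)$.

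Next I would insert the pole expansion of $h_C$ supplied by Lemma~\ref{expansion.lemma}, pull the convolution through the limit and the sum, and use the binomial expansion of $\log(x/y)$ exactly as in the proof of Theorem~\ref{MPCM.theorem} to reduce the question to the identity
\[(w \ast f_{\lambda, m})(x) = \sum_{j=0}^{m} (-1)^j \binom{m}{j} x^{-\lambda}(\log x)^{m-j} \, \frac{d^j}{d\lambda^j} M(w)(\lambda),\]
where $M(w)(s) = \int_0^\infty w(y) y^s \frac{dy}{y}$ is the Mellin transform. Multiplicativity of $M$ under $\ast$ factors $M(w) = M(w_0) \cdot M(\mathscr{Z}(\Phi_1, f_{\pi,1})) \cdot M(\mathscr{Z}(\Phi_2, f_{\pi,2}))$. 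Mellin inversion gives $M(w_0)(s) = \Gamma(C, s)^2 A(C)^s s^4(s-2)^4(s-1)^2$, while the Godement--Jacquet integral representation recalled earlier yields $M(\mathscr{Z}(\Phi_i, f_{\pi,i}))(s) = \Psi_i(s) L(\pi, s - \tfrac{1}{2}) = \Psi_i(s) L(C, s)$ with each $\Psi_i$ holomorphic. Using $\Lambda(C, s)^2 = L(C, s)^2 A(C)^s \Gamma(C, s)^2$ one then obtains
\[M(w)(s) = \Lambda(C, s)^2 \cdot s^4(s-2)^4(s-1)^2 \cdot \Psi_1(s) \Psi_2(s).\]

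The task is then to verify that $M(w)$ and its derivatives of order up to $m_\lambda - 1$ vanish at every pole $\lambda$ of $\xi(k, s/2 + 1/4)^2 \xi(C, s + 1/2)^2$. These poles split into two families: those inherited from the numerator factors $\xi(k, s)$, $\xi(k, s-1)$, $\xi(k, s/2)$ occur at specific rational points and are killed, after the rescaling built into the definition of $h_C$, by the polynomial factor $s^4(s-2)^4(s-1)^2$; those coming from zeros of $L(C, \cdot)$ in the denominator of $\xi(C, \cdot)^2$ are killed by the matching zeros of $\Lambda(C, s)^2$, which vanishes to twice the $L$-zero multiplicity. The factors $\Psi_i$ are harmless at these points. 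The principal analytic obstacle will be the rigorous justification of the interchange of convolution with the infinite limit in the expansion of $h_C$; this requires a uniform bound, in the strong-Schwartz topology, on the partial sums, most cleanly obtained by shifting the defining contour of $h_C$ past the relevant finite set of poles and estimating the remaining vertical integral as a strong-Schwartz remainder whose convolution with $w$ tends to zero.
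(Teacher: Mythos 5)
Your proposal is correct and follows essentially the same route as the paper: the paper's own proof is a short pointer back to the argument for Theorem~\ref{MPCM.theorem}, which your write-up correctly spells out by factoring the Mellin transform $M(w)=M(w_0)\cdot M(\mathscr{Z}(\Phi_1,f_{\pi,1}))\cdot M(\mathscr{Z}(\Phi_2,f_{\pi,2}))$ via Godement--Jacquet and matching the zeros of $\Lambda(C,s)^2\,s^4(s-2)^4(s-1)^2$ (up to the variable shift built into $h_C$) against the Dedekind poles and $\Lambda$-zeros listed in Lemma~\ref{expansion.lemma}. Your closing remark about justifying the interchange of convolution with the limiting pole expansion is a legitimate point of care that the paper leaves implicit (it is handled in the cited reference for the expansion), and your inclusion of the entire $\Psi_i$ factors in the Godement--Jacquet identity is in fact slightly more careful than the paper's statement.
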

\begin{proof}
Again, we have the expansion of $h_C$ in lemma~\ref{expansion.lemma}. The poles $\lambda$ are either the poles of shifted Dedekind zeta functions, which we can quantify, or the zeros of $\Lambda(C,s)$. Computing the convolution as in the theorem of the previous section, we are again lead to a series of zeros.
\end{proof}
In order to recover the result for Hecke characters one must induce the associated representation of $GL_1(\mathbb{A}_k)\times GL_1(\mathbb{A}_k)$ to $GL_{2}$. Dually, one can also prove that the set of translates to the boundary function are orthogonal the modified image of the zeta integrals.
\begin{theorem}
Let $C$ be a smooth projective curve over a number field $k$, and let $\pi$ be as above, then
\[\mathcal{T}(h_{C})\subset\mathcal{W}_{\pi}^{\perp}.\]
\end{theorem}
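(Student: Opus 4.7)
The plan is to deduce this statement from Theorem \ref{MPGL.theorem} by exploiting the commutativity of convolution on $\textbf{S}(\mathbb{R}_+^\times)$. The key point is that for two strong Schwartz functions $f,g \in \textbf{S}(\mathbb{R}_+^\times)$, the defining integral
\[(f \ast g)(x) = \int_0^{\infty} f(x/y) g(y) \frac{dy}{y}\]
is symmetric under the substitution $y \mapsto x/y$, which preserves the Haar measure $dy/y$ on $\mathbb{R}_+^\times$; hence $f \ast g = g \ast f$. Since $\mathcal{W}_\pi \subset \textbf{S}(\mathbb{R}_+^\times)$ and we may view $\mathcal{T}(h_C) \subset \textbf{S}(\mathbb{R}_+^\times) \hookrightarrow \textbf{S}(\mathbb{R}_+^\times)^{\ast}$ via the natural pairing, under this identification the assertions $g \ast w = 0$ and $w \ast g = 0$ coincide, so the theorem becomes a formally dual restatement of Theorem \ref{MPGL.theorem}.

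First I would reduce to proving the single identity $w \ast h_C = 0$ for every $w \in \mathcal{W}_\pi$. Since $\mathcal{T}(h_C)$ is the topological closure of $\text{Span}_{\mathbb{C}}\{\tau_a^\times h_C : a \in \mathbb{R}_+^\times\}$, continuity of the convolution operator $w \ast -$ on $\textbf{S}(\mathbb{R}_+^\times)$ lets one restrict attention to the translates $\tau_a^\times h_C$. A change of variable in the defining integral yields the translation-equivariance
\[w \ast \tau_a^\times h_C = \tau_a^\times(w \ast h_C),\]
so proving $w \ast h_C = 0$ for all $w \in \mathcal{W}_\pi$ suffices.

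Finally, Theorem \ref{MPGL.theorem} asserts $\mathcal{W}_\pi \subset \mathcal{T}(h_C)^\perp$, and since $h_C \in \mathcal{T}(h_C)$ itself, we deduce $h_C \ast w = 0$ as an identity of functions on $\mathbb{R}_+^\times$ (where $w$ acts as the weak-tempered distribution given by integration against the function it literally is). Combined with the commutativity observation above, this gives $w \ast h_C = h_C \ast w = 0$, and the theorem follows.

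The main technical obstacle is the continuity of convolution in the strong Schwartz topology used to extend the identity from the dense span of translates to the whole closure $\mathcal{T}(h_C)$. This is a standard feature stemming from the rapid decay of the elements of $\mathcal{W}_\pi$ (inherited from $w_0$ and the adelic zeta integrals $\mathscr{Z}(\Phi, f_\pi)$) and a dominated-convergence argument with respect to the seminorms $\|\cdot\|_{m,n}$ of Definition \ref{strongschwartz.definition}. An equivalent and more hands-on route is to mimic the proof of Theorem \ref{MPGL.theorem} verbatim: apply Lemma \ref{expansion.lemma} to expand $h_C$ as a limit of sums of functions $f_{\lambda,m}(x) = x^{-\lambda}(\log x)^m$, use the translation-equivariance above to reduce each convolution to a Mellin integral $\int_0^\infty w(y) y^\lambda \frac{dy}{y} = \Lambda(C,\lambda)^2 \lambda^4 (\lambda-2)^4 (\lambda-1)^2 \prod_i \Psi_{f_i,\pi}(\lambda)$, and check that this product vanishes at every pole $\lambda$ of $\xi(k,\tfrac{s}{2}+\tfrac{1}{4})^2 \xi(C,s+\tfrac{1}{2})^2$.
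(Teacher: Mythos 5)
Your approach is essentially what the paper intends: the paper states this theorem without proof, offering only the preceding remark that one can ``dually'' prove the orthogonality of the translates to $\mathcal{W}_{\pi}$, and commutativity of convolution on the abelian group $\mathbb{R}_{+}^{\times}$ (together with translation-equivariance, to pass to the closure defining $\mathcal{T}(h_C)$) is exactly what makes that duality precise; your backup route of mimicking the expansion-lemma calculation from Theorem~\ref{MPGL.theorem} is likewise sound. One correction worth making: $h_C$ does not belong to $\textbf{S}(\mathbb{R}_{+}^{\times})$ as you assert when you write $\mathcal{T}(h_C)\subset\textbf{S}(\mathbb{R}_{+}^{\times})\hookrightarrow\textbf{S}(\mathbb{R}_{+}^{\times})^{\ast}$ --- by Lemma~\ref{expansion.lemma} it is a limit of exponential polynomials $x^{-\lambda}(\log x)^m$ with $\lambda$ in a vertical strip, hence has only polynomial growth and lives in $\mathscr{C}^{\infty}_{\text{poly}}(\mathbb{R}_{+}^{\times})\hookrightarrow\textbf{S}(\mathbb{R}_{+}^{\times})^{\ast}$, not in the strong Schwartz space itself. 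This does not affect the substance of the argument, since the convolution $w\ast h_C$ with $w\in\mathcal{W}_{\pi}\subset\textbf{S}(\mathbb{R}_{+}^{\times})$ is still absolutely convergent and symmetric, but the ambient space for $\mathcal{T}(h_C)$ should be $\textbf{S}(\mathbb{R}_{+}^{\times})^{\ast}$ throughout.
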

We will now use an adelic construction to prove a similar result to theorem~\ref{MPGL.theorem}. To do so, we will write $n=2g$ and introduce the following subset of the Schwartz-Bruhat space $S(M_n(\mathbb{A}_k))$:
\[S(M_n(\mathbb{A}_k))_0=\{\Phi\in S(M_n(\mathbb{A}_k)):\Phi(x)=\widehat{\Phi}(x)=0,\text{ for }x\in M(\mathbb{A}_k)-G(\mathbb{A}_k)\}.\]
For $\Phi\in S(M_n(\mathbb{A}_k))_0$, following Deitmar \cite{APHOFALF}, we define the functions
\[E(\Phi),\widehat{E}(\Phi):G(\mathbb{A}_k)\rightarrow\mathbb{C}\]
\[E(\Phi)(g)=|g|^{n/2}\sum_{\gamma\in M_n(\mathbb{A}_k)}\Phi(\gamma g)\]
\[\widehat{E}(\Phi)(g)=|g|^{n/2}\sum_{\gamma\in M_n(\mathbb{A}_k)}\widehat{\Phi}(g\gamma).\]
By \cite[theorem 3.1, lemma 3.5]{APHOFALF}, when $\phi_{\pi}=\otimes_v\phi_{\pi,v}$ is such that $\phi_{\pi,v}$ is a normalized class one vector for almost all places, we have that there is an entire function $F_{\Phi,\phi_{\pi}}$ of $s\in\mathbb{C}$ such that
\[\int_{G(k)\backslash G(\mathbb{A}_k)}E(\Phi)(g)\phi_{\pi}(g)|g|^{s-n/2}dg=L(\pi,s-\frac{1-n}{2})F_{\Phi,\phi_{\pi}}(s)\]
We will denote the analogue of $\mathscr{Z}$ above by $\mathcal{Z}$, which we construct following \cite[3.2]{TDAAACAROGL2}. For $\Phi\in S(M_n(\mathbb{A}_k))_0$, define
\[\mathcal{Z}(\Phi),\widehat{\mathcal{Z}}(\Phi)\in\textbf{S}(G(k)\backslash G(\mathbb{A}_k))=\bigcap_{\beta\in\mathbb{R}}|~|^{\beta}S(G(k)\backslash G(\mathbb{A}_k))\]
\[\mathcal{Z}(\Phi)(g)=\frac{E(\Phi)(g)}{|g|^{n/2}}=\sum_{\gamma\in M_n(\mathbb{A}_k)}\Phi(\gamma g)\]
\[\widehat{\mathcal{Z}}(\Phi)(g)=\frac{\widehat{E}(\Phi)(g)}{|g|^{n/2}}\sum_{\gamma\in M_n(\mathbb{A}_k)}\widehat{\Phi}(g\gamma).\]
The fact that these functions are in the strong Schwartz space follows from \cite[Proposition~3.1]{APHOFALF}. We have the module map
\[m:G(\mathbb{A}_k)\rightarrow\mathbb{R}^{\times}_{+}\]
\[m(g)=|\det(g)|_{\mathbb{A}_k},\]
which fits into the split exact sequence
\[1\rightarrow G(\mathbb{A}_k)_1\rightarrow G(\mathbb{A}_k)\rightarrow\mathbb{R}_{+}^{\times}\rightarrow1,\]
where $G(\mathbb{A}_k)_1$ denotes the elements of $G(\mathbb{A}_k)$ of module $1$, and the arrows are the natural ones. 
\newline With $\phi_{\pi}$ as above, define
\[\mathcal{W}_{\pi}=\overline{\text{Span}_{\mathbb{C}}\{(w_0\circ m)\ast(\mathcal{Z}(\phi_1)\cdot\phi_{\pi})\ast(\mathcal{Z}(\phi_2)\cdot\phi_{\pi}):\phi_i\in S(M_n(\mathbb{A}_k))\}},\]
where $\mathcal{Z}(\phi)\cdot\phi_{\pi}(x)=\mathcal{Z}(\phi)(x)\phi_{\pi}(x)$ and $\ast$ is the convolution via the right regular representation. The orthogonal complement of $\mathcal{W}_{\pi}$ with respect to convolution is denoted $\mathcal{W}_{\pi}^{\perp}$:
\[\mathcal{W}_{\pi}^{\perp}:=\{\eta\in\textbf{S}(G(k)\backslash G(\mathbb{A}_k))^{\ast}:w\ast\eta=0,\forall w\in\mathcal{W}_{\pi}\}.\]
For all $\eta\in\textbf{S}(G(k)\backslash G(\mathbb{A}_k))^{\ast}$, define
\[\mathcal{T}(\eta)=\overline{\text{Span}_{\mathbb{C}}\{R^{\ast}(g)\eta:g\in G(\mathbb{A}_k)\}},\]
where $R$ is the right regular representation of $G(\mathbb{A}_k)$ on $S(G(k)\backslash G(\mathbb{A}_k))$ and $R^{\ast}$ is its transpose with respect to the pairing of $S(G(k)\backslash G(\mathbb{A}_k))$ and $S(G(k)\backslash G(\mathbb{A}_k))^{\ast}$.
\newline We note that the following theorem generalizes \cite[Theorem~3.2]{TDAAACAROGL2} as well as correcting a small mistake.
\begin{theorem}
If $C$ is a smooth projective curve and $\mathcal{W}_{\pi}$ is as above, then
\[\mathcal{T}(h_{C}\circ m)\subset\mathcal{W}_{\pi}^{\perp}.\]
\end{theorem}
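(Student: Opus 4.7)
The plan is to adapt the argument of Theorem~\ref{MPGL.theorem}, lifted from $\mathbb{R}^{\times}_{+}$ to $G(\mathbb{A}_k)$ through the module map $m$. First I would reduce the task to showing $w \ast (h_{C}\circ m) = 0$ for a generating element $w = (w_0 \circ m) \ast (\mathcal{Z}(\phi_1)\cdot\phi_{\pi}) \ast (\mathcal{Z}(\phi_2)\cdot\phi_{\pi})$, since $\mathcal{W}_{\pi}$ is by definition the closed span of such elements. Once this holds, $G(\mathbb{A}_k)$-equivariance of the right regular representation with respect to convolution pushes the vanishing to every element of $\mathcal{T}(h_{C}\circ m)$, since a translate $R^{\ast}(g_0)(h_{C}\circ m)$ is just $h_{C}(|\cdot|\,|g_0|)$ and convolution commutes with right translation. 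Continuity of convolution then propagates through the topological closure.

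The second step is to insert the expansion from Lemma~\ref{expansion.lemma}: because $h_C \circ m$ depends on $g$ only through $|g|$, we obtain
\[(h_{C}\circ m)(g) = \lim_{T\to\infty}\sum_{\mathrm{Im}(\lambda)\leq T}\sum_{m=0}^{m_{\lambda}-1} C_{m+1}(\lambda)\frac{(-1)^m}{m!}|g|^{-\lambda}(\log|g|)^m,\]
where $\lambda$ ranges over poles of $\xi(k,s/2+1/4)^2\xi(C,s+1/2)^2$. Assuming the limit and convolution may be interchanged, it suffices to show $w \ast (|\cdot|^{-\lambda}(\log|\cdot|)^m \circ m) = 0$ for each such $\lambda$. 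By differentiating under the integral sign in $\lambda$, this reduces to showing that the Mellin-type integral $M_w(\lambda):=\int_{G(\mathbb{A}_k)} w(x)|x|^{\lambda}\,dx$ vanishes to sufficient order at each pole $\lambda$.

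The third and central step is the explicit computation of $M_w(\lambda)$. Since convolution on $G(\mathbb{A}_k)$ intertwines with the multiplicative character $|\cdot|^{\lambda}$, the factored form of $w$ gives $M_w(\lambda)$ as a product of three integrals. The contribution of $w_0 \circ m$, computed by Mellin inversion, is $\Gamma(C,\lambda_{\ast})^2 A(C)^{\lambda_{\ast}} \lambda^4(\lambda-2)^4(\lambda-1)^2$ for an appropriate rescaling $\lambda_{\ast}$. Each $\mathcal{Z}(\phi_i)\cdot\phi_{\pi}$ contribution unfolds via Deitmar's identity $\int_{G(k)\backslash G(\mathbb{A}_k)} E(\Phi)(g)\phi_{\pi}(g)|g|^{s-n/2}\,dg = L(\pi,s-\tfrac{1-n}{2})F_{\Phi,\phi_{\pi}}(s)$, which, after removing the $|g|^{n/2}$ between $E$ and $\mathcal{Z}$, produces $L(\pi,\lambda_{\ast}-\tfrac{1-n}{2})$ times an entire function. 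Using the standing hypothesis $L(C,s) = L(\pi,s-\tfrac{1}{2})$, this yields $L(C,\lambda_{\ast})$ per factor. Altogether $M_w(\lambda)$ is proportional to $\Lambda(C,\lambda_{\ast})^2 \lambda^4(\lambda-2)^4(\lambda-1)^2$ multiplied by entire functions.

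Finally I invoke the trichotomy of poles used in the proof of Theorem~\ref{MPGL.theorem}: after rescaling, each $\lambda$ is either a pole of $\xi(k,\cdot)\xi(k,\cdot - 1)$, located at $0$, $1$ or $2$ and hence killed by the polynomial $\lambda^4(\lambda-2)^4(\lambda-1)^2$, or a zero of $\Lambda(C,\cdot)$, killed by the squared factor $\Lambda(C,\lambda_{\ast})^2$. In all cases $M_w(\lambda)$ (and its $\lambda$-derivatives up to order $m_{\lambda}-1$) vanishes, so every term in the expansion convolves to zero and $w\ast(h_C\circ m)=0$. The main obstacle I anticipate is analytic rather than algebraic: one must justify the term-by-term convolution through the series and the differentiation in $\lambda$, which requires the strong-Schwartz decay estimates of \cite{APHOFALF} for $\mathcal{Z}(\phi)\phi_{\pi}$ to hold uniformly over a complex strip wide enough to enclose all the poles of $\xi(k,s/2+1/4)^2\xi(C,s+1/2)^2$, and one must carefully track the three different rescalings $s\mapsto s/2+1/4$, $s\mapsto s+1/2$, and $s\mapsto s-\tfrac{1-n}{2}$ so that the shifted completed $L$-function appearing in $M_w$ really coincides with the $\Lambda(C,\cdot)^2$ dictated by Lemma~\ref{expansion.lemma}.
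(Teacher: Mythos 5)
Your proposal follows essentially the same route as the paper's own (very terse, two-sentence) proof: expand $h_C\circ m$ via Lemma~\ref{expansion.lemma}, convolve a generating element of $\mathcal{W}_\pi$ against each exponential-polynomial term, factor the resulting Mellin-type integral over the three convolution factors, invoke Deitmar's unfolding identity for the $\mathcal{Z}(\phi_i)\cdot\phi_\pi$ pieces, and observe the vanishing at each $\lambda$ via the zeros of $\Lambda(C,\cdot)^2$ and of the polynomial built into $w_0$. You have made explicit several things the paper leaves implicit, most usefully the reduction to a single generating element of $\mathcal{W}_\pi$ by translation-equivariance and continuity, and the switch from the Godement--Jacquet integral used in Theorem~\ref{MPGL.theorem} to Deitmar's $E(\Phi)$-based identity, which is the correct ingredient here since $\mathcal{W}_\pi$ is built from $\mathcal{Z}(\Phi)\cdot\phi_\pi$ rather than matrix coefficients. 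One small slip: a right translate $R^\ast(g_0)(h_C\circ m)$ is $g\mapsto h_C(|g|/|g_0|)$, not $h_C(|g||g_0|)$, though this does not affect the argument. Your closing concern about reconciling the three shifts $s\mapsto s/2+1/4$, $s\mapsto s+1/2$ and $s\mapsto s-\tfrac{1-n}{2}$ is well placed --- as written, the polynomial $s^4(s-2)^4(s-1)^2$ in $w_0$ and the shift in Deitmar's identity do not line up on the nose with the pole locations coming from $\xi(k,s/2+1/4)^2\xi(C,s+1/2)^2$; the paper disposes of this with the phrase ``upon rescaling $\lambda$'' in the proof of Theorem~\ref{MPCM.theorem} and does not revisit it here, so your flagging of it is the appropriate level of caution rather than a defect of your approach relative to the source.
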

\begin{proof}
The proof proceeds as previously, via the expansion of $h_C(s)$ in \ref{expansion.lemma}. The poles $\lambda$ are again the zeros of $\Lambda(\pi,s-\frac{1}{2})$ or those of the Mellin transform of $w_0$.
\end{proof}
\section{Whittaker Expansions}\label{whittaker.section}
The mean-periodicity correspondence tells us that one may infer the meromorphic continuation and functional equation of $L$-functions of arithmetic curves $C$ from a convolution against what we have denoted $h_C$. In this section we will compare this convolution to those in the Rankin--Selberg method. More precisely, we will compare $h_C$ to an Eisenstein series.
\subsection{Aside - Fourier Series}
The ideas of this subsection are not strictly necessary in the reading of this paper, though may provide some helpful intuition of mean-periodic functions, especially for what follows.
\newline Each strong Schwartz function on $\mathbb{R}$ is a distribution of over-exponential decay, and each function of at most exponential growth on $\mathbb{R}$ is a weak tempered function. To be precise, there are  continuous injections
\[\mathscr{C}^{\infty}_{\text{exp}}(\mathbb{R})\hookrightarrow\textbf{S}(\mathbb{R})^{\ast},\]
\[\textbf{S}(\mathbb{R})\hookrightarrow\mathscr{C}^{\infty}_{\text{exp}}(\mathbb{R})^{\ast},\]
where $\mathscr{C}^{\infty}_{\text{exp}}(\mathbb{R})$ is the space of smooth functions on $\mathbb{R}$ with at most exponential growth at $\pm\infty$, that is
\[\forall n\in\mathbb{Z}_{+},\exists m\in\mathbb{Z}_{+},f^{(n)}(x)=O(\text{exp}(m|x|))\text{ as }x\rightarrow\pm\infty.\]
This space is an inductive limit of Fr\'{e}chet spaces $(F_m)_{m\geq1}$, where $F_m$ is the space of smooth functions on $\mathbb{R}$ such that for each positive integer $n$, $f^{(n)}(x)=O(\exp(m|x|))$ as $x\rightarrow\pm\infty$. There is a topology on each $F_m$ which is induced by the family of seminorms
\[||f||_{m,n}=\sup_{x\in\mathbb{R}}|f^{(n)}(x)\text{exp}(-m|x|)|.\]
The space $\mathscr{C}^{\infty}_{\text{poly}}(\mathbb{R}_{+}^{\times})$ of smooth functions from $\mathbb{R}^{\times}_{+}$ to $\mathbb{C}$ which have at most polynomial growth at $0^{+}$ and $+\infty$ is defined as those smooth functions $\mathbb{R}^{\times}_{+}$ such that
\[\forall n\in\mathbb{Z}_{+},\exists m\in\mathbb{Z},f^{(n)}(t)=O(t^m)\text{ as }x\rightarrow0^{+},+\infty,\]
and its topology is such that the bijection
\[\mathscr{C}^{\infty}_{\text{exp}}(\mathbb{R})\rightarrow\mathscr{C}^{\infty}_{\text{poly}}(\mathbb{R}_{+}^{\times})\]
\[f(t)\mapsto f(-\log(x)),\]
becomes a homeomorphism. Dual to these spaces, with respect to the weak $\ast$-topology, we have the spaces of distributions of over polynomial decay. We have continuous injections
\[\mathscr{C}^{\infty}_{\text{poly}}(\mathbb{R}^{\times_{+}})\hookrightarrow\textbf{S}(\mathbb{R}^{\times_{+}})^{\ast}\]
\[\textbf{S}(\mathbb{R}^{\times_{+}})\hookrightarrow\mathscr{C}^{\infty}_{\text{poly}}(\mathbb{R}^{\times_{+}})^{\ast}.\]
As proved in \cite{SSISTVSOF}, the advantage of the spaces $\mathscr{C}^{\infty}_{\text{exp}}(\mathbb{R})$ and $\mathscr{C}^{\infty}_{\text{poly}}(\mathbb{R}^{\times_{+}})$ and their duals is that they admit spectral synthesis and, subsequently, a theory of generalized Fourier series, as we now explain. 
\newline Let $G=\mathbb{R}$ (resp. $\mathbb{R}^{\times}_{+}$) and let $X$ be a complex vector space of functions on $G$. We will assume that there exists an open $\Omega\subset\mathbb{C}$ such that for all $P(T)\in\mathbb{C}[T]$ and any $\lambda\in\Omega$ the exponential polynomials:
\[\begin{cases}
P(t)e^{\lambda t},\text{ }G=\mathbb{R}, \\
x^{\lambda}P(\text{log}(x)),\text{ }G=\mathbb{R}^{\times}_{+},
\end{cases}\]
belong to $X$. We say that spectral synthesis holds in $X$ if, whenever $\mathcal{T}(f)\neq X$,
\[\mathcal{T}(f)=\begin{cases} \overline{\text{Span}_{\mathbb{C}}\{P(t)e^{\lambda t}\in\mathcal{T}(f),\lambda\in\Omega\}}, & G=\mathbb{R},\\ \overline{\text{Span}_{\mathbb{C}}\{x^{\lambda}P(\log(x))\in\mathcal{T}(f),\lambda\in\Omega\}}, & G=\mathbb{R}^{\times}_{+}.\end{cases}\] 
When spectral synthesis holds in $X$, $f$ is $X$-mean-periodic if and only if $f$ is a limit of a sum of exponential polynomials in $\mathcal{T}(f)$.
\subsection{Boundary Functions and Eisenstein Series}
Let $C$ be a smooth projective curve over a number field $k$ and consider the following product of completed zeta functions:
\[Z_C(s)=\xi(k,s)^2\xi(C,2s)^2,\]
where $\xi(C,s)$ denotes the (completed) Hasse--Weil zeta function of $C$. In this section we will assume that $L(C,s)$ admits meromorphic continuation and satisfies the expected functional equation. Consequently
\[Z_C(s)=Z_C(1-s).\] 
The boundary function $h_C:\mathbb{R}_{+}^{\times}\rightarrow\mathbb{C}$ associated to $Z_C$ is
\[h_C(x)=f_C(x)-\varepsilon x^{-1}f_C(x^{-1}),\]
where $f_C$ is the inverse Mellin transform
\[f_C(s)=\frac{1}{2\pi i}\int_{(c)}Z_C(s)x^{-s}ds.\]
We can express $\xi(C,s)$ as a ratio of completed $L$-functions
\[\xi(C,s)=\frac{\xi(k,s)\xi(k,s-1)}{\Lambda(C,s)},\]
so that, according to \cite[Theorem~4.7]{SRF}, $h_C(s)$ satisfies the convolution equation
\[v\ast h_C=0,\]
where $v$ is the inverse Mellin transform of the denominator
\[v(x)=\frac{1}{2\pi i}\int_{(c)}\Lambda(C,2s)P(s)x^{-s}dx,\]
where $P(x)$ is the following polynomial
\[P(s)=s(s-1)\]
Writing
\[Z_C(s)=\gamma(s)D(s),\]
where $\gamma(s)$ is specified below and  $D(s)$ is the Dirichlet series representing $\zeta(C,2s)^2$, which we will write
\[D(s)=\sum_{m\geq1}\frac{c_m}{m^s},\]
we have
\[h_C(x)=\sum c_m[\kappa(mx)-\varepsilon x^{-1}\kappa(mx^{-1})],\]
where $\kappa(x)$ is the inverse Mellin transform of $\gamma(s)$. $\gamma(s)$ is a product involving the square of the completed Dedekind zeta function $\xi(k,s)$, and so its inverse Mellin transform is related to Eisenstein series for $\GL_{2}(\mathbb{A}_k)$, and hence has a Whittaker expansion which we will compute below. This connection to Eisenstein series goes a small way to substantiating the comparison to the Rankin-Selberg method of the convolutions studied in the previous sections.
\newline For simplicity, let $C=E$ be an elliptic curve over $\mathbb{Q}$, then the gamma factor of $\zeta(C,s)$ is trivial and
\[\kappa(x)=\frac{1}{2\pi i}\int_{(c)}\xi(k,s)^2x^{-s}ds.\]
We will understand this inverse Mellin transform as an Eisenstein series on $\GL_2(\mathbb{A}_{\mathbb{Q}})$ and state the associated Whittaker expansion.
\newline For $w\in\mathbb{C}$ such that $\Re(w)>1$, define
\[\mathscr{E}(z,w)=\frac{1}{2}\sum_{(c,d)=1}\frac{y^w}{|cz+d|^{2w}},\]
where $z=x+iy$, $y>0$ and the sum is taken over coprime integers $c$ and $d$.
\newline According to \cite[Theorem~3.1.8]{AFLFGLR}, we have the Fourier-Whittaker expansion:
\begin{align}
\mathscr{E}(z,w) = & y^w +\phi(w)y^{1-w}\nonumber \\
  & +\frac{2^{1/2}\pi^{w-1/2}}{\Gamma(\mathbb{Q},w)\zeta(\mathbb{Q},2w)}\sum_{n\neq0}\sigma_{1-2w}(n)|n|^{w-1}\sqrt{2\pi|n|y}K_{w-1/2}(2\pi|n|y)e^{2\pi nx},\nonumber
\end{align}
where $\phi$ is the following function
\[\phi(s)=\sqrt{\pi}\frac{\Gamma(s-\frac{1}{2})}{\Gamma(s)}\frac{\zeta(\mathbb{Q},2s-1)}{\zeta(\mathbb{Q},2s)}.\]
Note that the modified Bessel functions appearing in this expansion are closely related to Whittaker functions. To be precise
\[K_v(z)=(\frac{\pi}{2z})^{1/2}W_{0,v}(2z).\]
The $L$-function of $\mathscr{E}(z,w)$ is defined as follows, for $\Re(s)>\min\{1/2-w,w-1/2\}$
\[\mathcal{L}(\mathscr{E},s)=\sum_{n=1}^{\infty}\sigma_{1-2w}(n)n^{w-s-1/2}.\]
As in \cite[\S3.14]{AFLFGLR}, one can deduce that this is simply a product of Riemann 
\[\mathcal{L}(\mathscr{E},s)=\zeta(\mathbb{Q},s+w-1/2)\zeta(\mathbb{Q},s-w-1/2).\]
We therefore have meromorphic continuation and a simple functional equation for $\mathcal{L}(\mathscr{E},s)$. We recover the following expansion for $h_C(x)$:
\[h_C(x)=\sum_{n\geq1}(\sum_{d|n}c_d\sigma_0(n/d))[K_0(2\pi ne^{-t})-e^tK_0(2\pi ne^t)].\]
Such expressions are far more difficult to obtain over arbitrary number fields, as well as for curves of arbitrary genus.
\subsection*{Acknowledgement}
The author was supported by the Heilbronn Institute for Mathematical Research. The author is grateful to Ivan Fesenko for much encouragement during the early days of this paper and to the anonymous referee for several helpful suggestions which have been included in the current version.
\bibliography{studygroup}
\bibliographystyle{plain}
\end{document}